\newtheorem{theorem}{Theorem}[section]
\newtheorem{lemma}[theorem]{Lemma}
\newtheorem{corollary}[theorem]{Corollary}
\newtheorem{proposition}[theorem]{Proposition}
\newtheorem{conjecture}[theorem]{Conjecture}
\numberwithin{equation}{section}
\theoremstyle {definition}
\newtheorem{definition}[theorem]{Definition}
\newtheorem{remark}[theorem]{Remark}
\DeclareMathOperator{\Ric}{Ric}
\DeclareMathOperator{\image}{Im}
\begin{document}
\title[PMT with arbitrary ends]{Positive mass theorem with arbitrary ends and its application}
\author{Jintian Zhu}
\address{Beijing International Center for Mathematical Research, Peking University, Beijing, 100871, P.~R.~China}
\email{zhujintian@bicmr.pku.edu.cn}
\subjclass[2020]{Primary 53C21; Secondary 83C99}

\begin{abstract}
In this article, we give a proof for positive mass theorem of asymptotically flat manifolds with arbitrary ends when the dimension is no greater than seven. As an application, we also show a positive mass theorem for asymptotically locally Euclidean manifolds with necessary incompressible conditions.
\end{abstract}
\maketitle

\section{Introduction}

In mathematical general relativity, one of the most beautiful results is the {\it positive mass theorem} proved by Schoen and Yau, which states that every complete {\it asymptotically flat} $3$-manifold with nonnegative scalar curvature has nonnegative ADM mass and the mass vanishes exactly when it is the Euclidean $3$-space. Shortly after that, Schoen \cite{Schoen1989} was able to generalize this result to higher dimensions no greater than seven based on dimension descent argument. Under additional spin condition, Witten \cite{Witten81} gave a proof for the positive mass theorem in all dimensions with Dirac operator method. Among all these results, asymptotically flat manifolds under consideration are assumed to have all its ends close to the Euclidean space.

In their book
\cite{SY94}, Schoen and Yau made the conjecture that positive mass theorem still holds for asymptotically Schwarzschild manifolds with nonnegative scalar curvature even if some of its ends are complete but far from the Euclidean space. Such manifolds can be referred to as {\it asymptotically flat manifolds with arbitrary ends} and the precise meaning is given in the definition below. Let us assume that $n$ is an integer no less than three throughout the paper.

\begin{definition}\label{Def 1}
A smooth {\it complete} Riemannian manifold $(M,g)$ with {\it no boundary} and a distinguished end $\mathcal E$ is called an asymptotically flat manifold with arbitrary ends if
\begin{itemize}
\item $\mathcal E$ is diffeomorphic to $\mathbb R^n-\bar B_1$;
\item the metric $g$ restricted to $\mathcal E$ has the expression
$
g_{ij}=\delta_{ij}+h_{ij},
$
where the error term $h$ satisfies the decay condition
\begin{equation}\label{Eq: decay condition}
|h|+r|\partial h|+r^2|\partial\partial h|\leq Cr^{2-n},\quad r=|x|.
\end{equation}
\item the scalar curvature satisfies $R(g)\leq Cr^{-q}$ in $\mathcal E$ for some $q>n$.
\end{itemize}
\end{definition}

The phrase ``arbitrary ends'' here means that one does not need to impose additional requirements except completeness on those ends other than $\mathcal E$. The figure in next page illustrates the difference between our Definition \ref{Def 1} and the classical one.
\begin{figure}[htbp]\label{Fig: 1}
\centering
\includegraphics[width=11cm]{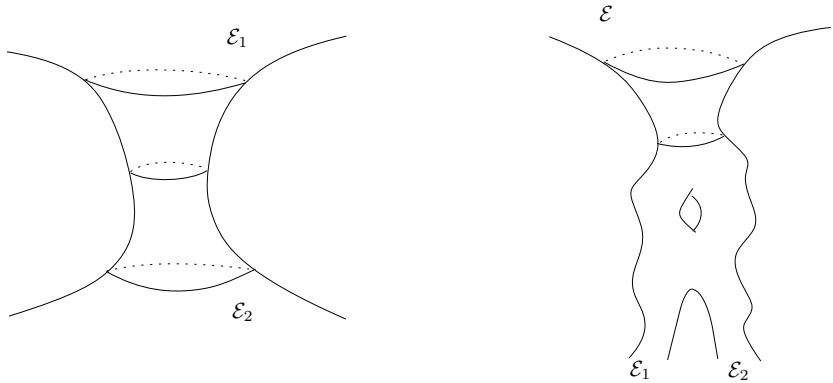}
\caption{\small The left picture presents a classical asymptotically flat manifold with two ends $\mathcal E_1$ and $\mathcal E_2$, both of which are asymptotic to the Euclidean space. The right one shows an asymptotically flat manifold with arbitrary ends, where $\mathcal E$ is  the distinguished end asymptotic to the Euclidean space while $\mathcal E_1$ and $\mathcal E_2$ are only complete with no specific asymptotics.}
\end{figure}

The original motivation of Schoen-Yau's conjecture is to prove the so-called Liouville theorem in conformal geometry: if a complete and locally conformally flat Riemannian $n$-manifold $M$ with nonnegative scalar curvature has a conformal map $\Phi$ to $n$-sphere, then $\Phi$ is injective and $\partial \Phi(M)$ has zero Neumann capacity. Recently, Lesourd, Unger and Yau \cite{LUY2021} successfully proved the nonnegativity of ADM mass for asymptotically Schwarzschild manifolds with arbitrary ends as well as nonnegative scalar curvature up to dimension seven. In their proof, they established an adapted version of Schoen's dimension descent argument, where minimal hypersurfaces in the original proof are replaced by soap bubbles. It turns out that Lesourd-Unger-Yau's result is enough to complete above program of Schoen and Yau.

Back to the positive mass theorem for asymptotically flat manifolds with arbitrary ends, the answer given by Lesourd, Unger and Yau can be further improved since the rigidity part was not dealt with there and asymptotically Schwarzschild could be generalized to more general asymptotics. A complete answer turns out to be desirable in order to obtain right comprehension on effects of arbitrary ends. On the other hand, Lesourd, Unger and Yau expressed their concern in \cite{LUY2021} on whether the usual procedure --- the density theorem in \cite{SY81} as well as Lohkamp's compactification in \cite{Lohkamp99} can be modified in the presence of arbitrary ends. At the first sight, these ends may lead to a lack of controls of involved PDEs.

In this paper, we would like to prove the positive mass theorem for asymptotically flat manifolds with arbitrary ends based on the idea of density theorem and Lohkamp's compactification after introducing some technical modifications. The key observation here is that only positive lower bound are required on arbitrary ends in the construction of various conformal factors. In order to construct such conformal factors from solving elliptic equations (see Proposition \ref{Prop: the conformal factor}), the Harnack inequality is enough to guarantee their positivity on arbitrary ends. In our use of these conformal factors in Section \ref{Sec 3}, a positive lower bound will be achieved just by adding a small positive constant and doing renormalization.

In the following, $(M,g,\mathcal E)$ is always denoted to be an asymptotically flat manifold with arbitrary ends, where $\mathcal E$ is the distinguished asymptotically flat end.
The {\it ADM mass} of $(M,g,\mathcal E)$ is defined to be that of the  end $\mathcal E$ given by the limit
$$
\lim_{\rho\to\infty}\frac{1}{2(n-1)|\mathbb S^{n-1}|}\int_{\{r=\rho\}}(g_{ij,j}-g_{jj,i})\,\mathrm d\sigma^i,
$$
where $g_{ij,k}$ is denoted to be the partial derivative $\partial_k g_{ij}$ and $\mathrm d\sigma^i$ is the normal area element of $\{r=\rho\}$ with respect to the Euclidean metric. A famous result of Bartnik in \cite{Bartnik86} (see also Chru\'sciel's work \cite{Chr1986}) shows that the ADM mass is a geometric quantity independent of the choice of coordinate system at infinity.

Now we can state our main result as
\begin{theorem}\label{Thm: main 1}
Let $n\leq 7$. If $(M,g,\mathcal E)$ has nonnegative scalar curvature, then its ADM mass is nonnegative. Moreover, the ADM mass vanishes if and only if $(M,g)$ is isometric to the Euclidean space.
\end{theorem}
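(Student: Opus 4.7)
My plan is to reduce Theorem \ref{Thm: main 1} to the positive mass theorem of Lesourd--Unger--Yau \cite{LUY2021} for asymptotically Schwarzschild manifolds with arbitrary ends, via a density theorem and a Lohkamp-style conformal compactification, both adapted to the presence of arbitrary ends; the rigidity statement is then handled by a conformal perturbation argument combined with analyticity of Ricci-flat metrics. For the density step, given small $\varepsilon>0$ I would produce a conformal deformation $\tilde g=u^{4/(n-2)}g$ such that $R(\tilde g)\geq 0$ and vanishes outside a compact set, $\tilde g$ has harmonic (Schwarzschild-type) asymptotics on $\mathcal E$, $|m(\tilde g)-m(g)|<\varepsilon$, and $u>0$ everywhere on $M$. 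The conformal factor $u$ would be obtained by solving the conformal Laplacian equation $-\tfrac{4(n-1)}{n-2}\Delta u+R(g)u=f$ on a compact exhaustion and passing to a limit in weighted spaces on $\mathcal E$. On the arbitrary ends no global PDE estimates are available, but as the introduction emphasizes only positivity of $u$ is required there, which I would guarantee via the Harnack inequality on each geodesic ball in those ends, after adding a small positive constant to $u$ and renormalizing.

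\textbf{Compactification and nonnegativity.} With $\tilde g$ in hand, I would then modify $\tilde g$ only outside a very large Euclidean ball in $\mathcal E$---leaving the arbitrary ends untouched---so that the metric becomes exactly Schwarzschild with mass arbitrarily close to $m(\tilde g)$, while preserving $R\geq 0$. This is Lohkamp's local bending, carried out purely on $\mathcal E$. The resulting metric then satisfies the hypotheses of Lesourd--Unger--Yau and therefore has nonnegative ADM mass. Sending $\varepsilon\to 0$ gives $m(g)\geq 0$.

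\textbf{Rigidity.} Assume $m(g)=0$. If $R(g)\not\equiv 0$, I would solve $-\tfrac{4(n-1)}{n-2}\Delta u+R(g)u=0$ with $u\to 1$ on $\mathcal E$ (again using Harnack on the arbitrary ends for positivity); the resulting metric $u^{4/(n-2)}g$ has vanishing scalar curvature and, by the standard Bondi-type expansion, strictly smaller ADM mass, contradicting the nonnegativity just established. Hence $R(g)\equiv 0$. Next I would take a one-parameter family $g_t=g+th$ with $h$ a compactly supported symmetric $2$-tensor, solve the conformal equation to produce $\tilde g_t$ with $R(\tilde g_t)=0$, and apply the inequality $m(\tilde g_t)\geq 0$. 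Since $m(\tilde g_0)=0$ is a minimum, the first variation in $t$ vanishes for every admissible $h$; the linearization identities of Bartnik then force $\Ric(g)\equiv 0$. Combined with the asymptotic flatness of $\mathcal E$, this forces $g$ to be flat near infinity of $\mathcal E$; real-analyticity of Ricci-flat metrics in harmonic coordinates propagates flatness to the entire connected manifold $M$, and then completeness together with the AF asymptotic model identifies $(M,g)$ with Euclidean space.

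\textbf{Main obstacle.} The decisive difficulty is the density step. The classical proofs in \cite{SY81} rely on global maximum principles and weighted Sobolev estimates that are unavailable on unknown arbitrary ends. The key will be to arrange the PDE for $u$ so that only \emph{local} tools on those ends---the Harnack inequality, interior Schauder estimates, and a small positive shift---suffice to guarantee $u>0$, while the usual weighted analysis on $\mathcal E$ still delivers the decay $u-1=O(r^{2-n})$ needed for the mass comparison. Once this is in place, the Lohkamp step is purely local to $\mathcal E$ and imports directly from \cite{Lohkamp99}, and the rigidity analysis essentially follows the classical Schoen--Yau pattern after the same Harnack trick is applied to the family $\tilde g_t$.
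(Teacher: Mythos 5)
Your overall architecture --- density theorem via a conformal factor, positivity on the arbitrary ends from Harnack plus a small positive shift and renormalization, then reduction to a known black-box theorem, followed by a perturbative rigidity argument (scalar flat, then Ricci flat) --- matches the paper's skeleton. Two remarks on where you diverge and where a step actually fails.

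\textbf{Nonnegativity: a different black box, and a redundant Lohkamp step.} You reduce directly to Lesourd--Unger--Yau after the density step, whereas the paper runs Lohkamp's compactification to pass to a closed manifold $T^n\sharp X$ and then invokes the Geroch conjecture with arbitrary ends (Chodosh--Li for $n\le 7$). Your route is logically fine and arguably shorter: once the density theorem produces an asymptotically Schwarzschild manifold with arbitrary ends and $R\ge 0$, the LUY theorem applies immediately. However, your intermediate Lohkamp ``local bending'' step is both unnecessary and mis-described: Lohkamp's cutoff produces a metric that is \emph{exactly Euclidean} (not exactly Schwarzschild) near infinity, precisely so that one can then identify opposite faces of a large cube and land on $T^n\sharp X$; it has nothing to do with making the metric ``exactly Schwarzschild with mass close to $m(\tilde g)$,'' and the LUY theorem does not require exact Schwarzschild anyway. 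You should simply drop that paragraph and go density $\Rightarrow$ LUY. The paper's choice to go through the Geroch conjecture instead is deliberate --- it cleanly isolates the dimensional restriction in the closed-manifold problem and flags the route to all dimensions via the singular minimal slicing analysis.

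\textbf{Rigidity: a genuine gap in the final step.} Your argument establishing $R(g)\equiv 0$ and then $\Ric(g)\equiv 0$ is in the same spirit as the paper's (the paper perturbs by $-\epsilon\eta\Ric(g)$ and invokes Kazdan's eigenvalue variation, you propose a first-variation of ADM mass in a compactly supported family $g_t=g+th$; both are defensible). But your conclusion from there is wrong. You assert that $\Ric(g)\equiv 0$ ``combined with asymptotic flatness forces $g$ to be flat near infinity,'' and then propagate flatness by analyticity. Ricci flatness together with asymptotic flatness does \emph{not} imply flatness near infinity --- the Schwarzschild metric is the standard counterexample. Without a flat open set you have nothing to propagate. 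What is actually needed is a global comparison-geometry argument: since $\Ric\ge 0$, first rule out additional ends via the Cheeger--Gromoll splitting theorem (a split factor $N\times\mathbb R$ with totally geodesic $N$-slices is incompatible with an asymptotically flat end), and then apply Bishop--Gromov volume comparison, using the fact that the asymptotically flat end gives Euclidean volume growth, to force equality and hence $(M,g)\cong\mathbb R^n$. Your analyticity argument must be replaced by this.
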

\begin{remark}
The completeness from Definition \ref{Def 1} for asymptotically flat manifolds with arbitrary ends is crucial for Theorem \ref{Thm: main 1}. Otherwise, it is easy to see that Schwarzschild manifolds with negative mass serve as counterexamples.
\end{remark}

Roughly speaking, Theorem \ref{Thm: main 1} indicates that complete ends can not lead to a large mass drop on the asymptotically flat end and it is interesting to figure out whether the same phenomenon happens in the setting of Penrose inequality.

Closely related to our Theorem \ref{Thm: main 1}, we mention the {\it Geroch conjecture with arbitrary ends} below.
\begin{conjecture}\label{Conj: Geroch}
For any $n$-manifold $X$, there is no smooth complete metric on $T^n\sharp X$ with positive scalar curvature. Moreover, the only complete metric on $T^n\sharp X$ with nonnegative scalar curvature is flat.
\end{conjecture}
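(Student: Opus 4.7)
The plan is to reduce Conjecture \ref{Conj: Geroch}---in the range $n\leq 7$ where Theorem \ref{Thm: main 1} applies---to the positive mass theorem with arbitrary ends via a covering space reduction of Schoen--Yau type. I would argue by contradiction: assume $(T^n\sharp X,g)$ is a complete metric with positive scalar curvature, and manufacture from it an asymptotically flat manifold with arbitrary ends whose ADM mass is strictly negative, violating Theorem \ref{Thm: main 1}.

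Pass to the abelian cover $\pi:\widetilde M\to T^n\sharp X$ associated with the quotient $\pi_1(T^n\sharp X)=\mathbb Z^n\ast\pi_1(X)\twoheadrightarrow\mathbb Z^n$ that kills $\pi_1(X)$. Topologically, $\widetilde M$ is $\mathbb R^n$ with a copy of $X\setminus D^n$ attached at each lattice point; the pulled back metric $\tilde g$ is complete, $\mathbb Z^n$-periodic on the $\mathbb R^n$-part, and still has positive scalar curvature. Next, solve the conformal Laplacian equation $L_{\tilde g}u=0$ on $\widetilde M$ with $u\to 1$ at infinity along a chosen direction $\mathcal E\subset\mathbb R^n$, where $L_{\tilde g}=-\tfrac{4(n-1)}{n-2}\Delta_{\tilde g}+R(\tilde g)$. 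Existence of such a $u$ via compact exhaustion is standard; crucially, positivity of $u$ on each of the (possibly infinitely many) arbitrary ends coming from the copies of $X$ is delivered by the Harnack inequality combined with the add-a-small-constant renormalization device that the author emphasizes in the introduction. The rescaled metric $\bar g=u^{4/(n-2)}\tilde g$ then satisfies $R(\bar g)=0$, is AF along $\mathcal E$ in the sense of Definition \ref{Def 1} (after possibly an additional localized perturbation to convert the periodic $\mathbb R^n$-geometry into genuine Euclidean asymptotics), and keeps the other ends arbitrary complete, so that $(\widetilde M,\bar g,\mathcal E)$ falls under the scope of Theorem \ref{Thm: main 1}.

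The sign of the mass is pinned down by the asymptotic expansion $u=1+Ar^{2-n}+o(r^{2-n})$ along $\mathcal E$ together with the integral identity
\[
-(n-2)|\mathbb S^{n-1}|\,A=\tfrac{n-2}{4(n-1)}\int_{\widetilde M} R(\tilde g)\,u\,\mathrm{d}v_{\tilde g},
\]
obtained by integrating the Yamabe equation $\Delta_{\tilde g}u=\tfrac{n-2}{4(n-1)}R(\tilde g)u$ and applying the divergence theorem. Since $R(\tilde g)>0$ and $u>0$, one deduces $A<0$, whence $m_{\mathrm{ADM}}(\bar g)=2A<0$, contradicting Theorem \ref{Thm: main 1}. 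The rigidity clause (nonnegative scalar $\Rightarrow$ flat) reduces to the positive scalar case by the usual Kazdan--Warner localized perturbation: if $g$ is not Ricci-flat, a small compactly supported deformation produces a nearby complete metric with strictly positive scalar curvature, to which the previous argument applies.

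The main obstacle is the conformal deformation step. Standard density-theorem and Lohkamp-compactification techniques presume decay at every end, while $\widetilde M$ carries infinitely many ends on which nothing is assumed, and its $\mathbb R^n$-part is only periodic rather than asymptotically Euclidean. The saving observation, flagged by the author in the introduction, is that only a positive lower bound on the conformal factor is required on the arbitrary ends, and the Harnack-plus-renormalization device provides it; reconciling this with the demand for genuine Euclidean asymptotics along $\mathcal E$---perhaps by combining the conformal rescaling with an auxiliary localized perturbation in the style of Lohkamp's compactification---is where the real technical work will sit.
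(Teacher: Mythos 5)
The paper treats this statement as an open conjecture and does not prove it; for $n\leq 7$ it simply invokes the theorem of Chodosh and Li \cite{CL2020}, and the paper's own contribution runs in the \emph{opposite} direction, namely Theorem \ref{Thm: relate to torus}, which derives the positive mass theorem with arbitrary ends \emph{from} Conjecture \ref{Conj: Geroch} via Lohkamp's compactification.

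Your proposal is therefore circular within this paper's logical structure. You invoke Theorem \ref{Thm: main 1} to establish Conjecture \ref{Conj: Geroch}, but Theorem \ref{Thm: main 1} is proved in Section \ref{Sec 3} precisely by assuming Conjecture \ref{Conj: Geroch} (with the $n\leq 7$ case supplied by Chodosh--Li). You cannot recover the Geroch statement from Theorem \ref{Thm: main 1} without begging the question; the nontrivial implication in the paper is torus $\Rightarrow$ PMT, not PMT $\Rightarrow$ torus.

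Beyond the circularity there is a concrete technical gap. The abelian cover $\widetilde M\to T^n\sharp X$ carries a $\mathbb Z^n$-periodic metric, not an asymptotically flat one: Definition \ref{Def 1} demands $|h|+r|\partial h|+r^2|\partial\partial h|\leq Cr^{2-n}$ along the distinguished end, and a periodic metric comes nowhere close to satisfying this. Your ``additional localized perturbation to convert the periodic $\mathbb R^n$-geometry into genuine Euclidean asymptotics'' is where the entire difficulty sits and cannot be achieved by a compactly supported change while keeping scalar curvature under control; this is exactly why Lohkamp's construction compactifies an AF end into a toroidal one rather than the reverse. The sign argument also breaks down: you write
\[
-(n-2)|\mathbb S^{n-1}|\,A=\tfrac{n-2}{4(n-1)}\int_{\widetilde M} R(\tilde g)\,u\,\mathrm{d}v_{\tilde g},
\]
but $R(\tilde g)$ is periodic with a positive lower bound on the $\mathbb R^n$-part and $u$ is uniformly positive, so the right-hand side diverges. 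Consequently $u$ does not admit the expansion $1+Ar^{2-n}+o(r^{2-n})$ on which the mass computation rests, and no ADM mass (negative or otherwise) is produced. The proposal as written does not constitute a proof of the statement, and no repair within the framework of this paper is available because the dependency between the two results goes the other way.
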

With soap bubble method, Chodosh and Li \cite{CL2020} gave an affirmative answer to above conjecture with dimension no greater than seven.
It is well-known to experts that this dimension restriction comes from possible singularity issues for minimizing soap bubbles. However, this difficulty may be overcome with a similar method from the work \cite{SY17}, where Schoen and Yau solved classical Geroch conjecture in any dimension based on a subtle analysis on singular minimal slicings. This suggests that Conjecture \ref{Conj: Geroch} may hold in all dimensions. Similar to the proof for classical positive mass theorem, we are going to reduce our Theorem \ref{Thm: main 1} to Conjecture \ref{Conj: Geroch}, which is already a theorem in dimensions no greater than seven.

As an application of Theorem \ref{Thm: main 1}, we can obtain a positive mass theorem for {\it asymptotically locally Euclidean} (ALE) manifolds. Recall
\begin{definition}
Given any finite group $\Gamma\subset O(n)$ acting freely on $\mathbb R^n-\{0\}$, a complete Riemannian $n$-manifold $(M_\Gamma,g_\Gamma,\mathcal E_\Gamma)$ without boundary is ALE of group $\Gamma$ with arbitrary ends if
\begin{itemize}
\item there is a diffeomorphism $\Phi:\mathcal E_\Gamma\to (\mathbb R^n-\bar B_1)/\Gamma$ such that
$$
(\pi\circ \Phi^{-1})^*(g_\Gamma)=\delta_{ij}+h_{ij}\quad\text{in}\quad \mathbb R^n-B_1
$$
with
$$
|h|+r|\partial h|+r^2|\partial\partial h|\leq Cr^{2-n},\quad r=|x|,
$$
where $\pi:(\mathbb R^n-B_1)\to (\mathbb R^n-B_1)/\Gamma$ is the canonical projection map.
\item the scalar curvature of the pullback metric $(\pi\circ \Phi^{-1})^*(g_\Gamma)$ is no greater than $Cr^{-q}$ in $\mathbb R^n-B_1$ for some $q>n$.
\end{itemize}
\end{definition}
\begin{definition}
Given any ALE manifold $(M_\Gamma,g_\Gamma,\mathcal E_\Gamma)$ of group $\Gamma$ with arbitrary ends, the ADM mass is defined to be
$$
\lim_{\rho\to\infty}\frac{1}{2(n-1)|\Gamma||\mathbb S^{n-1}|}\int_{\{r=\rho\}}(g_{ij,j}-g_{jj,i})\,\mathrm d\sigma^i,\quad g=(\pi\circ \Phi^{-1})^*(g_\Gamma).
$$
\end{definition}

In their work \cite{HP1978}, Hawking and Pope came up with the {\it general positive energy conjecture}: the ADM mass is nonnegative for all ALE $4$-manifolds with vanishing scalar curvature. However, a family of counterexamples to this conjecture were soon constructed by LeBrun in \cite{LeBrun1988}. We point out that those counterexamples are simply connected while their end has fundamental group isometric to $\mathbb Z_k$ with $k\geq 3$. The compressibility of the end turns out to be the key reason why positive mass theorem fails from the view of the following corollary.
\begin{corollary}\label{Cor: ALE}
Let $n\leq 7$. Denote $(M_\Gamma,g_{\Gamma},\mathcal E_\Gamma)$ to be an ALE manifold of non-trivial finite group $\Gamma$ with arbitrary ends such that the inclusion map $i_*:\pi_1(\mathcal E_\Gamma)\to \pi_1(M_{\Gamma})$ is injective. If it has non-negative scalar curvature, then it has positive ADM mass.
\end{corollary}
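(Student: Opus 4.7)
The plan is to unwrap the end $\mathcal{E}_\Gamma$ by passing to the universal cover so that it becomes a genuine asymptotically flat end, apply Theorem \ref{Thm: main 1}, and then use the incompressibility hypothesis to rule out the rigidity case.

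First I would consider the universal cover $p\colon\tilde M\to M_\Gamma$ with pullback metric $\tilde g=p^*g_\Gamma$. Since $n\geq 3$ we have $\pi_1(\mathcal{E}_\Gamma)=\Gamma$, so the injectivity of $i_*$ together with standard covering space theory forces each connected component of $p^{-1}(\mathcal{E}_\Gamma)$ to be simply connected. Fixing one such component $\tilde{\mathcal{E}}$, it is canonically identified as a cover of $\mathcal{E}_\Gamma$ with $\mathbb R^n-\bar B_1$, and under this identification $\tilde g|_{\tilde{\mathcal{E}}}$ is precisely the lift of $g_\Gamma$ to the ALE chart, so it inherits \eqref{Eq: decay condition} and the scalar curvature bound directly from the ALE hypothesis. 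Since $\tilde M$ is complete, boundaryless, and has nonnegative scalar curvature, $(\tilde M,\tilde g,\tilde{\mathcal{E}})$ is an asymptotically flat manifold with arbitrary ends in the sense of Definition \ref{Def 1}; the remaining components of $p^{-1}(\mathcal{E}_\Gamma)$ and the preimages of any other ends of $M_\Gamma$ simply play the role of ``arbitrary ends''.

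Next I would apply Theorem \ref{Thm: main 1} to deduce that the ADM mass $\tilde m$ of $(\tilde M,\tilde g,\tilde{\mathcal{E}})$ is nonnegative. A direct comparison of the two mass formulas shows that the integrand $(g_{ij,j}-g_{jj,i})\,d\sigma^i$ and the sphere $\{r=\rho\}$ appearing in the AF and ALE definitions coincide in the common chart $\mathbb R^n-\bar B_1$, while the normalizing constants differ exactly by a factor of $|\Gamma|$; hence $\tilde m=|\Gamma|\,m_{\mathrm{ALE}}$, and so $m_{\mathrm{ALE}}\geq 0$. For strict positivity, suppose $m_{\mathrm{ALE}}=0$. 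Then $\tilde m=0$ and the rigidity part of Theorem \ref{Thm: main 1} yields an isometry $(\tilde M,\tilde g)\cong(\mathbb R^n,\delta)$. Consequently $\pi_1(M_\Gamma)$ acts freely on $\mathbb R^n$ by Euclidean isometries and must therefore be torsion-free: every nontrivial finite-order element $x\mapsto Ax+b$ of $\mathrm{Iso}(\mathbb R^n)=O(n)\ltimes\mathbb R^n$ has a fixed point, because the orthogonal decomposition $\mathbb R^n=\ker(I-A)\oplus \image(I-A)$ combined with the finite-order constraint $(I+A+\cdots+A^{k-1})b=0$ forces $b\in\image(I-A)$. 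But injectivity of $i_*$ embeds the nontrivial finite group $\Gamma$ as a subgroup of $\pi_1(M_\Gamma)$, a contradiction; hence $m_{\mathrm{ALE}}>0$.

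The principal obstacle here is conceptual rather than technical: one has to recognize that the incompressibility hypothesis on $i_*$ is exactly the right condition to permit $\mathcal{E}_\Gamma$ to unfold into a genuine asymptotically flat end upstairs, turning Corollary \ref{Cor: ALE} into a direct application of Theorem \ref{Thm: main 1}. Once this reduction is in place, the mass comparison, the lifting of the decay estimates, and the fixed-point argument for finite-order Euclidean isometries are all elementary.
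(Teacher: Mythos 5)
Your proposal is correct and follows essentially the same route as the paper: pass to the universal cover, use incompressibility to show the lifted end is simply connected and hence a genuine asymptotically flat end, apply Theorem \ref{Thm: main 1} with the mass relation $\tilde m = |\Gamma|\, m_{\mathrm{ALE}}$, and rule out rigidity via the fact that nontrivial finite-order Euclidean isometries have fixed points while Deck transformations act freely. The only cosmetic difference is in the fixed-point step, where the paper averages $\frac{1}{|G|}\sum_{g\in G} g(0)$ to get a common fixed point for the whole finite subgroup, while you argue element-by-element via the orthogonal splitting $\mathbb R^n=\ker(I-A)\oplus\image(I-A)$; both are valid.
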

Despite a simple consequence of Theorem \ref{Thm: main 1}, Corollary \ref{Cor: ALE} belongs to the class of {\it positive mass theorems with incompressible condition}. The use of incompressible condition dates back to the works \cite{SY1979} and \cite{SY1982}, where Schoen and Yau proved that a complete Riemannian $3$-manifold $(M,g)$ cannot admit an immersed $2$-torus $\Sigma$ with the injective inclusion map $i_*:\pi_1(\Sigma)\to \pi_1(M)$ unless $(M,g)$ is isometrically covered by the Riemannian product of a flat $2$-torus and the real line. In recent work \cite{LSZ2021}, the author and his collaborators proved a positive mass theorem for a special class of ALF and ALG manifolds $(\tilde M,\tilde g,\tilde {\mathcal E})$ under the assumption that the inclusion map $i_*:\pi_1(\tilde{\mathcal E})\to \pi_1(\tilde M)$ is injective (and only $\image i_*\neq 0$ is required in ALF case).  Given these results, it is interesting to figure out whether positive mass theorem holds for manifolds with general asymptotics when an additional incompressible condition is imposed.

The rest part of the paper will be organized as follows. First we establish necessary preliminary analytic results in Section 2 and then we give detailed proofs for Theorem \ref{Thm: main 1} and Corollary \ref{Cor: ALE} in Section 3. 
\newline\\
{\bf Acknowledgments.} The author would like to thank professor Yuguang Shi for many inspiring conversations. This research is supported by the China post-doctoral grant BX2021013.

\section{Preparation in analysis}
The following lemma comes from \cite{SY79}.
\begin{lemma}\label{Lem: Sobolev}
Let $U$ be a smooth open neighborhood of $\mathcal E$ such that $U-\mathcal E$ has compact closure in $M$. There is a positive constant $c_S$ depending on $U$ and $g$ such that
\begin{equation}\label{Eq: Sobolev}
c_S\left(\int_U|\zeta|^\frac{2n}{n-2}\,\mathrm d\mu_g\right)^{\frac{n-2}{n}}\leq \int_U|\nabla\zeta|^2\,\mathrm d\mu_g
\end{equation}
for any smooth function with compact support in $\bar U$.
\end{lemma}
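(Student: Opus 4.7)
\emph{Proof proposal.} My plan is to reduce the desired inequality to the Sobolev inequality on a complete one-ended asymptotically flat manifold, which is a classical result. To do so, cap off $\bar U$ along $\partial U$ to build a complete smooth Riemannian manifold $(N,g_N)$ with a single asymptotically flat end, such that $(N,g_N)$ contains $(\bar U,g)$ isometrically outside a compact set. This is a standard construction since $\partial U$ is a compact smooth hypersurface of $M$: attach a compact Riemannian manifold with boundary $\partial U$ and extend $g$ smoothly across. Any $\zeta\in C_c^\infty(\bar U)$ vanishes on $\partial U$ (by continuity, since $\zeta\equiv 0$ just outside $\bar U$), so its zero extension $\tilde\zeta$ to $N$ lies in $H^1_0(N)$ with compact support. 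Applying the Sobolev inequality $\|\tilde\zeta\|_{L^{2n/(n-2)}(N,g_N)}\leq C\,\|\nabla\tilde\zeta\|_{L^2(N,g_N)}$ on $(N,g_N)$ then directly yields \eqref{Eq: Sobolev} with $c_S=C^{-2}$, since $\tilde\zeta=\zeta$ on $\bar U$ and vanishes elsewhere.

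The core step is thus the Sobolev inequality on $(N,g_N)$. I would establish it by a partition-of-unity argument: choose $R_0$ large enough that $g_N$ is $C^0$-close to the Euclidean metric on $\mathcal E\cap\{r\geq R_0\}$ (using the decay $|h|\leq Cr^{2-n}$), take a cutoff $\chi$ with $\chi\equiv 1$ on $\mathcal E\cap\{r\geq 2R_0\}$ and $\chi\equiv 0$ outside $\mathcal E\cap\{r\geq R_0\}$ satisfying $|\nabla\chi|\leq C/r$, and decompose $\tilde\zeta=\chi\tilde\zeta+(1-\chi)\tilde\zeta$. To the far piece $\chi\tilde\zeta$, apply the Euclidean Sobolev inequality via the asymptotic diffeomorphism $\mathcal E\cong\mathbb R^n\setminus\bar B_1$ and zero extension to $\mathbb R^n$; to the near piece $(1-\chi)\tilde\zeta$, which has support in a precompact open region, apply the Dirichlet Sobolev inequality on a bounded smooth domain $V\Subset N$ containing that support. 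The main technical obstacle is absorbing the cross-term $\int|\nabla\chi|^2\tilde\zeta^2\,\mathrm d\mu_{g_N}$ on the annulus $\{R_0\leq r\leq 2R_0\}$, since a direct Hölder bound produces a factor of order one times $\|\tilde\zeta\|_{L^{2n/(n-2)}}^2$ that is not obviously small. Since $|\nabla\chi|^2\leq C/r^2$ on the annulus, this is remedied by a Hardy-type inequality $\int\tilde\zeta^2/r^2\leq C\int|\nabla\tilde\zeta|^2$ on the end (which follows from the Euclidean Hardy inequality after a further cutoff-and-zero-extension step, whose own boundary error is in turn absorbed by the bounded-domain Sobolev inequality), allowing the cross-term to be controlled by a multiple of $\int|\nabla\tilde\zeta|^2$ and closing the argument.
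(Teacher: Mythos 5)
The paper gives no proof for this lemma; it simply cites \cite{SY79}, so there is nothing to compare your argument against verbatim. Your second half — proving the Sobolev inequality on a complete one-ended asymptotically flat manifold by splitting $\tilde\zeta$ into a far piece (Euclidean Sobolev) and a near piece (bounded-domain Dirichlet Sobolev), and controlling the cutoff cross-term $\int|\nabla\chi|^2\tilde\zeta^2$ by a Hardy inequality on the end — is a standard and correct route, essentially the argument one finds in Schoen--Yau.

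The gap is in the reduction step, specifically the claim that every $\zeta\in C_c^\infty(\bar U)$ vanishes on $\partial U$. This is false as stated: ``compact support in $\bar U$'' means $\operatorname{supp}\zeta$ is a compact subset of the \emph{closed} set $\bar U$, and such a function may perfectly well be nonzero on $\partial U$ (your parenthetical ``$\zeta\equiv 0$ just outside $\bar U$'' presupposes $\zeta$ is defined and vanishes outside $\bar U$, which is not part of the hypothesis). More to the point, the lemma must hold for functions that are nonzero on $\partial U$ for the paper's application in Proposition \ref{Prop: the conformal factor} to work: there \eqref{Eq: Sobolev} is applied to (the zero extension beyond $\partial B_R$ of) a solution $\zeta$ of a Neumann problem on the larger domain $U_{i,R}\supset U_R$, and such a $\zeta$ certainly does not vanish on $\partial U$. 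For such a function, your zero extension $\tilde\zeta$ across $\partial U$ has a jump discontinuity and hence does not lie in $H^1(N)$, so the entire reduction to $(N,g_N)$ collapses.

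The statement itself is still true — a compact boundary cannot obstruct a Sobolev inequality on a domain with a noncompact end — but you need an extension across $\partial U$ that does not force boundary values to vanish. The simplest repair is to double: form $DU=\bar U\cup_{\partial U}\bar U$ with the (Lipschitz) doubled metric, which is complete, boundaryless, and has two asymptotically flat ends, and extend $\zeta$ evenly. Then $\|E\zeta\|_{L^p(DU)}^p=2\|\zeta\|_{L^p(U)}^p$ and $\|\nabla E\zeta\|_{L^2(DU)}^2=2\|\nabla\zeta\|_{L^2(U)}^2$, and your partition-of-unity argument applies verbatim to $DU$ (the Lipschitz metric along the seam changes nothing, since only quasi-isometry to a smooth background is used). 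Alternatively, one can extend across $\partial U$ by a bounded $H^1$ extension operator and absorb the resulting $L^2$ boundary-collar term, but this requires an extra Poincaré-type step; doubling avoids it. Finally, a small remark on the first construction: the existence of a compact cap $K$ with $\partial K=\partial U$ is not automatic from $\partial U$ being a compact hypersurface — you should note that $\partial U$ together with a coordinate sphere $\{r=R\}$ cobounds the compact region $\overline{U_R}$, so filling the sphere with a ball produces the desired cap. Doubling sidesteps this point as well.
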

\begin{proposition}\label{Prop: the conformal factor}
Let $U$ and $c_S$ be the same as in Lemma \ref{Lem: Sobolev}. Assume that $f$ is a smooth function on $M$ with compact support in $U$ such that
\begin{itemize}
\item the negative part $f_-$ of $f$ satisfies
\begin{equation}\label{Eq: f slight negative}
\left(\int_{U}|f_-|^{\frac{n}{2}}\,\mathrm d\mu_g\right)^{\frac{2}{n}}\leq \frac{c_S}{2};
\end{equation}
\end{itemize}
Then the equation
\begin{equation*}
\Delta_g u-fu=0
\end{equation*}
has a positive solution $u$ on $M$ such that $u$ has the expansion
\begin{equation*}
u=1+Ar^{2-n}+\omega
\end{equation*}
with
$$
A=-\frac{1}{(n-2)|\mathbb S^{n-1}|}\int_U fu\,\mathrm d\mu_g
$$
and
$
|\omega|+r|\partial \omega|+r^2|\partial^2 \omega|\leq Cr^{1-n}
$
in $\mathcal E$. Moreover, we have
$$
\int_{\partial U}\frac{\partial u}{\partial \vec n}\,\mathrm d\sigma_g=0\quad\text{and}\quad \int_{\partial U}u\frac{\partial u}{\partial\vec n}\,\mathrm d\sigma_g\leq 0,
$$
where $\vec n$ is the outward unit normal of $\partial U$.
\end{proposition}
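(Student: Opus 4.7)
The plan is to construct $u$ as the limit of solutions to Dirichlet problems on a compact exhaustion $\{\Omega_j\}$ of $M$ with $\Omega_j\supset\bar U$. Writing $u=1+v$, the equation becomes $\Delta_g v-fv=f$. Combining \eqref{Eq: f slight negative} with \eqref{Eq: Sobolev} via H\"older gives the coercivity bound $B(\phi,\phi):=\int_M(|\nabla\phi|^2+f\phi^2)\,\mathrm d\mu_g\geq\tfrac{1}{2}\int_M|\nabla\phi|^2\,\mathrm d\mu_g$ for all $\phi\in H^1_0(\Omega_j)$, using that $f$ is supported in $U$ and applying \eqref{Eq: Sobolev} to $\phi|_U$ (which has compact support in $\bar U$). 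By Lax--Milgram I solve the weak problem $B(v_j,\psi)=-\int f\psi\,\mathrm d\mu_g$ for $\psi\in H^1_0(\Omega_j)$, obtaining a smooth $v_j\in H^1_0(\Omega_j)$; equivalently, $u_j:=1+v_j$ is a classical solution of $\Delta_g u_j-fu_j=0$ with $u_j|_{\partial\Omega_j}=1$. Testing with $v_j$ and applying coercivity yields a uniform $H^1$ bound, and extracting a weak $H^1_{\mathrm{loc}}$ limit $v$ I find $u:=1+v$ solves $\Delta_g u-fu=0$ on $M$.

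For positivity, the negative part $(u_j)_-$ lies in $H^1_0(\Omega_j)$ (since $u_j=1\geq 0$ on $\partial\Omega_j$); testing the equation with $(u_j)_-$ gives $B((u_j)_-,(u_j)_-)=0$, forcing $(u_j)_-\equiv 0$ by coercivity. Passing to the limit and invoking interior Harnack yields $u>0$ on $M$. On $\mathcal E$, $u$ is $g$-harmonic outside $\mathrm{supp}\,f$; since $g-\delta$ decays like $r^{2-n}$, standard asymptotic analysis for $\Delta_g$ on asymptotically Euclidean ends (iterating against the Euclidean Green's function $c_n|x-y|^{2-n}$ with $c_n=\tfrac{1}{(n-2)|\mathbb S^{n-1}|}$) produces the expansion $u=1+Ar^{2-n}+\omega$ with $|\omega|+r|\partial\omega|+r^2|\partial\partial\omega|\leq Cr^{1-n}$, and identifies $A=-\tfrac{1}{(n-2)|\mathbb S^{n-1}|}\int_U fu\,\mathrm d\mu_g$ via the Green's representation on the distinguished end.

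For the boundary identities, applying the divergence theorem to $\Delta_g u=fu$ on $\Omega_R:=U\cap\{r\leq R\}$ (compact, with boundary $\partial U\cup\{r=R\}$) gives
\begin{equation*}
\int_{\Omega_R}fu\,\mathrm d\mu_g=\int_{\partial U}\partial_{\vec n}u\,\mathrm d\sigma_g+\int_{\{r=R\}}\partial_{\nu_g}u\,\mathrm d\sigma_g.
\end{equation*}
Letting $R\to\infty$, the LHS tends to $\int_U fu$ while the last term tends to $-(n-2)A|\mathbb S^{n-1}|=\int_U fu$ by the expansion, which forces $\int_{\partial U}\partial_{\vec n}u=0$. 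For the inequality, on $W_j:=\Omega_j\setminus\bar U$ the function $v_j$ is $g$-harmonic and vanishes on $\Sigma_j:=\partial\Omega_j\cap(M\setminus U)$; testing $\Delta_g v_j=0$ with $v_j$ itself and integrating by parts yields
\begin{equation*}
\int_{\partial U}v_j\,\partial_{\vec n}v_j\,\mathrm d\sigma_g=-\int_{W_j}|\nabla v_j|^2\,\mathrm d\mu_g\leq 0.
\end{equation*}
Taking the limit and using $\partial_{\vec n}u=\partial_{\vec n}v$ together with $\int_{\partial U}\partial_{\vec n}u=0$, I conclude $\int_{\partial U}u\,\partial_{\vec n}u\,\mathrm d\sigma_g=\int_{\partial U}v\,\partial_{\vec n}v\,\mathrm d\sigma_g\leq 0$.

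The main obstacle is the presence of arbitrary ends in $M\setminus U$: there is no global Sobolev inequality, no a priori pointwise decay of $u$ on those ends, and $V=M\setminus U$ is non-compact so the divergence theorem does not apply directly on $V$. The strategy sidesteps this by exploiting that $f$ is compactly supported (so $u$ is harmonic outside $U$ and only $\mathcal E$ contributes to the asymptotic expansion), restricting variations to $H^1_0(\Omega_j)$ where coercivity depends only on the local Sobolev within $U$, and replacing integration on $V$ by integration on $W_j$, where the Dirichlet condition $v_j=0$ on $\Sigma_j$ kills the unwanted boundary contribution from the arbitrary ends. The most delicate technical step is passing to the limit $j\to\infty$ in the identity for $\int_{\partial U}v_j\partial_{\vec n}v_j$, which requires trace convergence of $v_j\to v$ on $\partial U$ and lower semicontinuity of the Dirichlet energy, both consequences of the uniform $H^1$ bounds and interior elliptic regularity.
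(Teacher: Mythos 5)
Your construction uses Dirichlet boundary conditions throughout (normalizing $u_j=1$ on all of $\partial\Omega_j$), whereas the paper's proof uses a two-parameter exhaustion $U_{i,R}=\{x\in U_i:r(x)\leq R\}$ with a Dirichlet condition on the outer sphere $\partial B_R\subset\mathcal{E}$ but a \emph{Neumann} condition $\partial v_{i,R}/\partial\vec n=0$ on $\partial U_i$, the part of the boundary that recedes into the arbitrary ends. This choice is not cosmetic: it is exactly what produces the identity $\int_{\partial U}\partial_{\vec n}u\,\mathrm d\sigma_g=0$. In the paper, $v_i$ is harmonic in $U_i\setminus U$, so $\int_{\partial U}\partial_{\vec n}v_i=\int_{\partial U_i}\partial_{\vec n}v_i=0$ by the Neumann condition, and the identity passes to the limit. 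Your existence, coercivity, and positivity arguments are fine, and your second computation $\int_{\partial U}v_j\partial_{\vec n}v_j=-\int_{W_j}|\nabla v_j|^2\leq 0$ is correct. The gap is the first boundary identity.

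Concretely, in your scheme $v_j$ is harmonic in $W_j=\Omega_j\setminus\bar U$ and vanishes on $\Sigma_j$, so the divergence theorem gives $\int_{\partial U}\partial_{\vec n}v_j=\int_{\Sigma_j}\partial_{\nu}v_j$, which is not zero and has no reason to vanish as $j\to\infty$ (e.g. if $M\setminus U$ has an asymptotically flat end, the limit $v$ restricted to that end will generically carry a nonzero $r^{2-n}$-coefficient, hence nonzero flux). Your derivation of $\int_{\partial U}\partial_{\vec n}u=0$ is circular: applying the divergence theorem on $U\cap\{r\leq R\}$ and sending $R\to\infty$ gives
\[
A=-\frac{1}{(n-2)|\mathbb S^{n-1}|}\Bigl(\int_U fu\,\mathrm d\mu_g-\int_{\partial U}\partial_{\vec n}u\,\mathrm d\sigma_g\Bigr),
\]
so the stated formula for $A$ is \emph{equivalent} to $\int_{\partial U}\partial_{\vec n}u=0$; the ``Green's representation on $\mathcal E$'' cannot deliver the simpler formula for $A$ without already knowing the flux vanishes. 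Since the stated $\int_{\partial U}u\,\partial_{\vec n}u\leq 0$ also uses $\int_{\partial U}\partial_{\vec n}u=0$ to reduce to $\int_{\partial U}v\,\partial_{\vec n}v\leq 0$, it is affected too. Moreover, the solution you build may genuinely differ from the one in the proposition: uniqueness of bounded positive solutions with $u\to 1$ on $\mathcal E$ fails in the presence of arbitrary ends, so you cannot argue that your $u$ must coincide with one having zero flux. The fix is to impose a Neumann condition on the portion of $\partial\Omega_j$ that lies in $M\setminus U$, as in the paper; everything else in your argument then goes through.
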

\begin{remark}
Compared to \cite[Lemma 3.2]{SY79}, the role of inner boundaries is now replaced by the infinity of arbitrary ends and so no boundary condition can be imposed.
\end{remark}
\begin{proof}
First let us take a smooth exhaustion
$$
U=U_0\subset U_1\subset U_2\subset\cdots
$$
such that $U_i-\mathcal E$ has a compact closure for all $i$ and
$$
M=\bigcup_{i=0}^{\infty}U_i.
$$
Such exhaustion can be constructed from collecting points no greater than a certain distance to $U$. Fix some $U_i$ and consider the following equation
\begin{equation}\label{Eq: v_i}
\Delta_g v_i-fv_i=f\ \ \text{in}\ \,U_i;\quad \frac{\partial v_i}{\partial \vec n}=0\ \ \text{on}\ \,\partial U_i,
\end{equation}
where $\vec n$ is denoted to be the outward unit normal of $\partial U_i$. It is standard that equation \eqref{Eq: v_i} can be solved by exhaustion method, which we review now. For any $R>1$, we take
$$
U_{i,R}=\{x\in U_i:r(x)\leq R\}
$$
and then $\partial U_{i,R}=\partial U_i\cup\partial B_R$, where
$\partial B_R=\{x\in M:r(x)=R\}\subset \mathcal E.$
With this setting, we consider the following equation
\begin{equation}\label{Eq: U_iR}
\left\{
\begin{array}{ccc}
\Delta_g v_{i,R}-fv_{i,R}=f&\text{in}& U_{i,R},\\
\frac{\partial v_{i,R}}{\partial \vec n}=0&\text{on}&\partial U_i,\\
v_{i,R}=0&\text{on}&\partial B_R.
\end{array}
\right.
\end{equation}
From Fredholm alternative \cite[Theorem 5.3]{GT2001}, equation \eqref{Eq: U_iR} has a smooth solution $v_{i,R}$ once the corresponding homogeneous equation to \eqref{Eq: U_iR} has only zero solution and so we verify this briefly. Let $\zeta$ solve the homogeneous equation. From integration by parts we see
\begin{equation*}
\int_{U_{i,R}}|\nabla_g\zeta|^2\,\mathrm d\mu_g=-\int_{U_{i,R}}f\zeta^2\,\mathrm d\mu_g.
\end{equation*}
Denote $U_R=\{x\in U:r(x)\leq R\}$. From \eqref{Eq: Sobolev} the left hand side is no less than
$$
c_S\left(\int_{U_R}|\zeta|^{\frac{2n}{n-2}}\,\mathrm d\mu_g\right)^{\frac{n-2}{n}}.
$$
However, the right hand side is no greater than
$$
\left(\int_{U_R}|f_-|^{\frac{n}{2}}\,\mathrm d\mu_g\right)^{\frac{2}{n}}\left(\int_{U_R}|\zeta|^{\frac{2n}{n-2}}\,\mathrm d\mu_g\right)^{\frac{n-2}{n}},
$$
where we use the H\"older inequality and the fact that $f$ has compact support in $U$. Combined with \eqref{Eq: f slight negative} we obtain
$$
\int_{U_R}|\zeta|^{\frac{2n}{n-2}}\,\mathrm d\mu_g=0
$$
and so $\zeta$ vanishes in $U_R$. Notice that $\zeta$ is a harmonic function in $U_{i,R}-U_R$. Then the maximum principle yields $\zeta\equiv 0$ and a solution $v_{i,R}$ of \eqref{Eq: U_iR} follows. From integration by parts to $v_{i,R}$ and H\"older inequality, we also have
\begin{equation*}
\begin{split}
\int_{U_{i,R}}|\nabla_g v_{i,R}|^2\,\mathrm d\mu_g\leq &\left(\int_{U_R}|f_-|^{\frac{n}{2}}\,\mathrm d\mu_g\right)^{\frac{2}{n}}\left(\int_{U_R}|v_{i,R}|^{\frac{2n}{n-2}}\,\mathrm d\mu_g\right)^{\frac{n-2}{n}}\\
&\qquad+\left(\int_{U_R}|f|^{\frac{2n}{n+2}}\,\mathrm d\mu_g\right)^{\frac{n+2}{2n}}\left(\int_{U_R}|v_{i,R}|^{\frac{2n}{n-2}}\,\mathrm d\mu_g\right)^{\frac{n-2}{2n}}.
\end{split}
\end{equation*}
Combined with \eqref{Eq: Sobolev} and \eqref{Eq: f slight negative}, it follows
$$
\left(\int_{U_{R}}|v_{i,R}|^{\frac{2n}{n-2}}\,\mathrm d\mu_g\right)^{\frac{n-2}{2n}}\leq 2c_S^{-1}C_0\quad\mbox{with}\quad C_0:=\left(\int_{U}|f|^{\frac{2n}{n+2}}\,\mathrm d\mu_g\right)^{\frac{n+2}{2n}}.
$$
Take an increasing sequence $R_j$ such that $R_j\to+\infty$ as $j\to\infty$. From the $L^p$-estimate \cite[Theorem 9.11]{GT2001} and Schauder estimate \cite[Theorem 6.2]{GT2001}, we have
\begin{equation}\label{Eq: local uniform bound}
\sup_K|\nabla_g^kv_{i,R_j}|\leq C(k,K)
\end{equation}
for any compact subset $K$ of $U$, where $C(k,K)$ is a constant depending only on $k$ and $K$. Fix a positive constant $r_0>1$. Now we show that $v_{i,R_j}$ in $U_{i,r_0}$ is uniformly bounded by a constant $C(i)$ independent of $j$. Otherwise, there is a subsequence (still denoted by $v_{i,R_j}$) such that
$$
\sup_{U_{i,r_0}}|v_{i,R_j}|\to +\infty\quad\text{but}\quad\left(\int_{U_{r_0}}|v_{i,R_j}|^{\frac{2n}{n-2}}\,\mathrm d\mu_g\right)^{\frac{n-2}{2n}}\leq 2c_S^{-1}C_0.
$$
Let
$$
w_{i,R_j}=\left(\sup_{U_{i,r_0}}|v_{i,R_j}|\right)^{-1}v_{i,R_j}.
$$
Clearly $|w_{i,R_j}|\leq 1$ in $U_{i,r_0}$ and so it follows from the Schauder estimates \cite[Theorem 6.2 and Theorem 6.30]{GT2001} that up to a subsequence $w_{i,R_j}$ converges uniformly to a limit function $w_i$ solving
$$\Delta_g w_i-f w_i=0\quad \text{in}\quad U_{i,r_0};\quad \frac{\partial w_i}{\partial\vec n}=0\quad \text{on}\quad \partial U_i,$$
and satisfying
$$
\sup_{U_{i,r_0}}|w_i|=1\quad\text{and}\quad w_i= 0\quad \text{in}\quad U_{r_0}.
$$
As before, $w_i$ is a harmonic function in $U_{i,r_0}-U_{r_0}$ and then the maximum principle yields $w_i\equiv 0$ in $U_{i,r_0}$, which leads to a contradiction. Now the uniform bound for $v_{i,R_j}$ in $U_{i,r_0}$ combined with Schauder estimate as well as estimate \eqref{Eq: local uniform bound} implies $v_{i,R_j}$ converges to a smooth solution $v_i$ of \eqref{Eq: v_i} up to a subsequence. Clearly we have
$$
\left(\int_{U}|v_i|^{\frac{2n}{n-2}}\,\mathrm d\mu_g\right)^{\frac{n-2}{2n}}\leq 2c_S^{-1}C_0.
$$
From the $L^\infty$-bound of $f$ and the asymptotically flatness of $\mathcal E$ we can apply the $L^p$-estimate to conclude
\begin{equation}\label{Eq: L infty bound vi}
\sup_{U}|v_i|\leq C_1
\end{equation}
for some constant $C_1$ independent of $i$ and also
\begin{equation}\label{Eq: 0 limit vi}
\lim_{r(x)\to +\infty}v_i(x)=0.
\end{equation}
Notice that $v_i$ is a harmonic function in $U_i-U$. Then the divergence theorem yields
\begin{equation}\label{Eq: v_i p1}
\int_{\partial U}\frac{\partial v_i}{\partial \vec n}\,\mathrm d\sigma_g=\int_{\partial U_i}\frac{\partial v_i}{\partial\vec n}\,\mathrm d\sigma_g=0
\end{equation}
and
\begin{equation}\label{Eq: v_i p2}
\int_{\partial U}v_i\frac{\partial v_i}{\partial \vec n}\,\mathrm d\sigma_g=-\int_{U_i-U}|\nabla_g v_i|^2\,\mathrm d\sigma_g\leq 0.
\end{equation}

Next we consider function $u_i=1+v_i$ instead. Clearly, $u_i$ solves
\begin{equation*}
\Delta_g u_i-fu_i=0\ \ \text{in}\ \,U_i;\quad \frac{\partial u_i}{\partial \vec n}=0\ \ \text{on}\ \,\partial U_i,
\end{equation*}
We claim that $u_i$ is positive everywhere in $U_i$. Observe that $u_i$ tends to one at the infinity of $\mathcal E$ due to \eqref{Eq: 0 limit vi}. All we need to show is the nonnegativity of $u_i$ and the positivity will come from the Harnack inequality \cite[Theorem 8.20]{GT2001}. Suppose that the set $\Omega_{i,-}=\{u_i<0\}$ is non-empty. Since $u_i$ is positive at the infinity of $\mathcal E$, the set $\Omega_{i,-}$ is compact. From integration by parts as well as the boundary condition of $u_i$, we have
\begin{equation*}
\int_{\Omega_{i,-}}|\nabla_g u_i|^2\,\mathrm d\mu_g=-\int_{\Omega_{i,-}}fu_i^2\,\mathrm d\mu_g.
\end{equation*}
As before, we conclude from \eqref{Eq: Sobolev} and \eqref{Eq: f slight negative} as well as the maximum principle that $u_i$ vanishes in $\Omega_{i,-}$.
Using the strong unique continuation property from \cite{GL1987}, we see that $u_i$ is identical to zero, which is impossible.
Now, we are able to apply the Harnack inequality \cite[Theorem 8.20]{GT2001} to obtain local smooth convergence of $u_i$. In fact, functions $u_i$ have a uniform bound in any compact subset of $M$ from \eqref{Eq: L infty bound vi}. Combined with Schauder estimates, it implies that $u_i$ converges smoothly to a nonnegative limit function $u$ up to a subsequence. Clearly $u$ solves the equation
$
\Delta_g u-f u=0
$
in $M$. Denote $v=u-1$, then we have
\begin{equation}\label{Eq: integral v}
\left(\int_{U}|v|^{\frac{2n}{n-2}}\,\mathrm d\mu_g\right)^{\frac{n-2}{2n}}\leq 2c_S^{-1}C_0.
\end{equation}
From $L^p$-estimate it follows
\begin{equation}\label{Eq: estimate v}
\sup_{U}|v|\leq C_1\quad \text{and}\quad\lim_{r(x)\to +\infty}v(x)=0.
\end{equation}
In particular, $u$ tends to one at the infinity of $\mathcal E$ and the Harnack inequality \cite[Theorem 8.20]{GT2001} yields that it is positive everywhere. From the proof of \cite[Lemma 3.2]{SY79}, $v$ has the expansion
$$
v=\frac{A}{r^{n-2}}+\omega
$$
with
$$
A=-\frac{1}{(n-2)|\mathbb S^{n-1}|}\int_{U}f(v+1)\,\mathrm d\mu_g
$$
and
$
|\omega|+r|\partial\omega|+r^2|\partial^2\omega|\leq Cr^{1-n}
$ in $\mathcal E$. The last statement comes directly from \eqref{Eq: v_i p1} and \eqref{Eq: v_i p2} as well as the smooth convergence of $u_i$.
\end{proof}

\section{Proof for Theorem \ref{Thm: main 1} and Corollary \ref{Cor: ALE}}\label{Sec 3}
First we establish the following density theorem as in \cite{SY81} and the audience should pay attention to the different choice of conformal factors below compared to \cite{SY81}. As a preparation, let us recall the definition of {\it asymptotically Schwarzschild manifold with arbitrary ends}.
\begin{definition}
A smooth complete Riemannian manifold $(M,g)$ with no boundary and a distinguished end $\mathcal E$ is called an asymptotically Schwarzschild manifold with arbitrary ends if
\begin{itemize}
\item $\mathcal E$ is diffeomorphic to $\mathbb R^n-\bar B_1$;
\item the metric $g$ restricted to $\mathcal E$ has the expression
\begin{equation}\label{Eq: AS expression}
g_{ij}=\left(1+\frac{m}{2r^{n-2}}\right)^{\frac{4}{n-2}}\delta_{ij}+h_{ij},
\end{equation}
where the error term $h$ satisfies the decay condition
\begin{equation*}
|h|+r|\partial h|+r^2|\partial\partial h|\leq Cr^{1-n},\quad r=|x|.
\end{equation*}
\end{itemize}
We point out that the ADM mass of an asymptotically Schwarzschild manifold $(M,g,\mathcal E)$ with arbitrary end equals to $m$ in the expression \eqref{Eq: AS expression}.
\end{definition}

The density theorem is stated as following
\begin{proposition}\label{Prop: deformation}
Assume that $(M,g,\mathcal E)$ has nonnegative scalar curvature whose ADM mass equals to $m$. For any $\epsilon>0$, we can construct a new metric $\bar g$ on $M$ such that
$(M,\bar g,\mathcal E)$ is an asymptotically Schwarzschild manifold with arbitrary ends, which has nonnegative scalar curvature and ADM mass $\bar m$ satisfying $|\bar m-m|\leq \epsilon$. Moreover, we can require $\|\bar g-g\|_g\leq \epsilon$ in the distinguished end $\mathcal E$.
\end{proposition}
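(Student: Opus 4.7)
The plan is to adapt the classical Schoen--Yau density theorem \cite{SY81} to the arbitrary-ends setting, with Proposition \ref{Prop: the conformal factor} playing the role usually played by the existence result on domains with an inner boundary. First I will fix a large parameter $\sigma>0$ and a smooth cutoff $\eta_\sigma$ on $\mathcal E$ with $\eta_\sigma\equiv 1$ on $\{r\le\sigma\}$, $\eta_\sigma\equiv 0$ on $\{r\ge 2\sigma\}$, and $|\partial^k\eta_\sigma|\le C\sigma^{-k}$. Writing $g_S=(1+\tfrac{m}{2r^{n-2}})^{4/(n-2)}\delta$ for the Euclidean Schwarzschild metric of mass $m$, I define the cutoff metric $g_\sigma=\eta_\sigma g+(1-\eta_\sigma)g_S$ on $\mathcal E$ and $g_\sigma=g$ on $M\setminus\mathcal E$. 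For $\sigma$ large $g_\sigma$ is a smooth Riemannian metric equal to $g_S$ outside $\{r\le 2\sigma\}$, so $(M,g_\sigma,\mathcal E)$ has ADM mass exactly $m$. Using the decay of $h$ and the explicit expansion of $g_S$ the pointwise bound $|R(g_\sigma)|\le C\sigma^{-n}$ holds on the transition annulus $A_\sigma=\{\sigma\le r\le 2\sigma\}$; outside $A_\sigma$ one has $R(g_\sigma)=R(g)\ge 0$ (and $R(g_\sigma)\equiv 0$ for $r\ge 2\sigma$).

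Next I pick a smooth cutoff $\tilde\chi$ compactly supported in $\mathcal E$, identically one on $A_\sigma$ and vanishing outside a slightly larger set on which $R(g_\sigma)\ge 0$ still holds, and set
\[
f_\sigma=\tfrac{n-2}{4(n-1)}R(g_\sigma)\tilde\chi.
\]
The negative part $(f_\sigma)_-$ is supported on $A_\sigma$ with $\|(f_\sigma)_-\|_{L^{n/2}}=O(\sigma^{2-n})$, so hypothesis \eqref{Eq: f slight negative} is satisfied for $\sigma$ large, and Proposition \ref{Prop: the conformal factor} produces a positive global solution $u_\sigma$ of $\Delta_{g_\sigma}u_\sigma=f_\sigma u_\sigma$ with the expansion $u_\sigma=1+A_\sigma r^{2-n}+\omega_\sigma$. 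Setting $\bar g=u_\sigma^{4/(n-2)}g_\sigma$, the conformal change of scalar curvature yields
\[
R(\bar g)=u_\sigma^{-4/(n-2)}R(g_\sigma)(1-\tilde\chi)\ge 0,
\]
because wherever $\tilde\chi<1$ the curvature $R(g_\sigma)$ is nonnegative by the choice of $\tilde\chi$. Combining the expansion of $u_\sigma$ with $g_\sigma=g_S$ near infinity, $\bar g$ is asymptotically Schwarzschild with ADM mass $\bar m=m+2A_\sigma$ and an $O(r^{1-n})$ error, so it meets the definition; the global positive lower bound for $u_\sigma$ needed to retain geodesic completeness on the arbitrary ends, should the Harnack bound not suffice, can be enforced by the constant-addition renormalization indicated in the introduction.

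It then remains to verify $|\bar m-m|\le\epsilon$ and $\|\bar g-g\|_g\le\epsilon$ on $\mathcal E$. The $C^0$ closeness is routine: $\|g_\sigma-g\|_g=O(\sigma^{2-n})$ from the explicit cutoff, and the proof of Proposition \ref{Prop: the conformal factor} bounds $\sup_{\mathcal E}|u_\sigma-1|$ in terms of $\|f_\sigma\|_{L^{2n/(n+2)}}$ and $\|f_\sigma\|_{L^\infty}$, both of which tend to $0$ as $\sigma\to\infty$. The hard part will be the mass bound: using $u_\sigma\to 1$ uniformly on $\mathrm{supp}(f_\sigma)$, the formula
\[
A_\sigma=-\frac{1}{(n-2)|\mathbb S^{n-1}|}\int f_\sigma u_\sigma\,d\mu_{g_\sigma}
\]
reduces it to showing $\int_{A_\sigma}R(g_\sigma)\,d\mu_g\to 0$. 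The pointwise bound $|R(g_\sigma)|\le C\sigma^{-n}$ together with $\vol(A_\sigma)\sim\sigma^n$ only yields $O(1)$, so the necessary cancellation must come from the divergence structure of the scalar curvature: writing $R(g_\sigma)$ as a coordinate divergence plus quadratic lower order terms and integrating over $A_\sigma$ produces boundary flux integrals at $r=\sigma$ (for $g$) and $r=2\sigma$ (for $g_S$), both of which converge to $2(n-1)|\mathbb S^{n-1}|m$ precisely because $g_S$ has been chosen with the same mass as $g$. This mass-matching is the crucial point; the arbitrary ends themselves pose no additional difficulty here, since $f_\sigma$ is supported entirely in the distinguished end and Proposition \ref{Prop: the conformal factor} has already absorbed the absence of inner boundary conditions via the Harnack inequality.
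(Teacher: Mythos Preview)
Your overall strategy---cut off to Schwarzschild at scale $\sigma$, conformally correct via Proposition~\ref{Prop: the conformal factor}, and exploit the divergence structure of $R(g_\sigma)$ to show the mass shift $A_\sigma\to 0$---is exactly the paper's, and your mass and $C^0$ estimates are handled correctly. But there is a genuine gap in reconciling completeness with nonnegative scalar curvature.

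Proposition~\ref{Prop: the conformal factor} yields $u_\sigma>0$ pointwise (Harnack), but \emph{no} global lower bound on the arbitrary ends: there $u_\sigma$ is a positive $g_\sigma$-harmonic function on a general complete end and may well tend to zero, in which case $\bar g=u_\sigma^{4/(n-2)}g_\sigma$ is incomplete. You propose to remedy this by the renormalization $u_{\sigma,\tau}=(u_\sigma+\tau)/(1+\tau)$, but then your scalar-curvature identity no longer holds. With $u_{\sigma,\tau}$ in place of $u_\sigma$ one obtains
\[
R(\bar g)=u_{\sigma,\tau}^{-\frac{n+2}{n-2}}(1+\tau)^{-1}\bigl(R(g_\sigma)(1-\tilde\chi)u_\sigma+R(g_\sigma)\tau\bigr),
\]
and on the transition annulus $A_\sigma$ (where $\tilde\chi\equiv 1$ and $R(g_\sigma)$ may be negative) this reduces to $(1+\tau)^{-1}R(g_\sigma)\tau<0$. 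So the two requirements are in genuine tension, and your handwave does not resolve it. The paper's fix is to build a small positive buffer into the equation from the start: one solves
\[
\Delta_{g_\sigma}u_\sigma-\tfrac{n-2}{4(n-1)}\bigl(\eta_\sigma R(g_\sigma)-\delta_\sigma\eta_\sigma\bigr)u_\sigma=0,
\]
with $\eta_\sigma$ a cutoff identically one on the bad annulus and $\delta_\sigma>0$ small. After the $\tau$-renormalization the scalar curvature then contains an additional term $\delta_\sigma\eta_\sigma u_\sigma$ with a positive lower bound on the bad annulus, which absorbs $R(g_\sigma)\tau$ once $\tau$ is chosen small enough; and $\delta_\sigma$ can be taken to decay with $\sigma$ so that the mass estimate is unaffected. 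Without this buffer your argument does not close.
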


\begin{proof}
First we can write the metric $g$ as
$$
g_{ij}=\left(1+\frac{m}{2r^{n-2}}\right)^{\frac{4}{n-2}}\delta_{ij}+\tilde g_{ij}.
$$
From the definition of ADM mass we have
\begin{equation}\label{Eq: no extra mass}
\lim_{\rho\to+\infty}\int_{S_\rho}(\tilde g_{ij,j}-\tilde g_{jj,i})\,\mathrm d\sigma^i=0.
\end{equation}
Take a fixed nonnegative cutoff function $\zeta:\mathbb R\to [0,1]$ such that $\zeta\equiv 0$ in $(-\infty,2]$, $\zeta\equiv 1$ in $[3,+\infty)$. With $s>1$ a constant to be determined later, we define
\begin{equation}\label{Eq: expression gs}
\hat g^s_{ij}
=\left(1+\frac{m}{r^{n-2}}\right)^{\frac{4}{n-2}}\delta_{ij}+\left(1-\zeta\left(\frac{r}{s}\right)\right)\tilde g_{ij}.
\end{equation}
Clearly we have
\begin{equation*}
|\hat g_{ij}^s-\delta_{ij}|+r|\partial\hat g_{ij}^s|+r^2|\partial \partial\hat g_{ij}^s|\leq Cr^{2-n},
\end{equation*}
where $C$ is always denoted to be a universal constant independent of $s$ here and in the sequel. In particular, the metrics $\hat g^s$ are uniformly equivalent to $g$ in some neighborhood $U$ of $\mathcal E$, and so the Sobolev inequality \eqref{Eq: Sobolev} holds for all metrics $\hat g^s$ with a uniform constant $c_S$ independent of $s$. Through a straightforward computation, we can also see
$
R(\hat g^s)\geq 0
$
in $\{r\leq 2s\}$, $R(\hat g^s)\equiv 0$ in $\{r\geq 3s\}$ and
$
|R(\hat g^s)|\leq Cs^{-n}$ in $\{s\leq r\leq 4s\}$.
So we have
\begin{equation}\label{Eq: integral scalar}
\left(\int_{\{s\leq r\leq 4s\}}|R(\hat g^s)|^{\frac{2n}{n+2}}\,\mathrm d\mu_{\hat g^s}\right)^{\frac{n+2}{2n}}\leq Cs^{\frac{2-n}{2}}
\end{equation}
and
$$
\left(\int_U|R(\hat g^s)_-|^{\frac{n}{2}}\,\mathrm d\mu_{\hat g^s}\right)^{\frac{2}{n}}\leq Cs^{2-n}\leq \frac{c_S}{4}
$$
for sufficiently large $s$.

Now we take another nonnegative cutoff function $\eta:\mathbb R\to [0,1]$ such that $\eta\equiv 0$ in $(-\infty,1]\cup[4,+\infty)$ and $\eta\equiv 1$ in $[2,3]$. Let $\eta_s(x)=\eta(r(x)/s)$. For any $s$ we can take a positive constant $\delta_s$ such that
\begin{equation}\label{Eq: Lp}
\left(\int_U|\left(\eta_sR(\hat g^s)-\delta_s\eta_s\right)_-|^{\frac{n}{2}}\,\mathrm d\mu_{\hat g^s}\right)^{\frac{2}{n}}\leq \frac{c_S}{2}
\end{equation}
and
\begin{equation}\label{Eq: delta_s}
\delta_s \left(1+\mathcal H^n_{\hat g^s}(\{s\leq r\leq 4s\})\right)\leq s^{-1}.
\end{equation}
Such $\delta_s$ exists since the condition \eqref{Eq: Lp} only depends on the continuity of the $L^p$ norm.
From Proposition \ref{Prop: the conformal factor} we can construct a solution $u_s$ of the following equation
$$
\Delta_{\hat g^s} u_s-\frac{n-2}{4(n-1)}\left(\eta_s R(\hat g^s)-\delta_s\eta_s\right)u_s=0
$$
and $u_s$ has the expansion $u_s=1+A_s r^{2-n}+O(r^{1-n})$ with
$$
A_s=-\frac{1}{4(n-1)|\mathbb S^{n-1}|}\int_U \left(\eta_s R(\hat g^s)-\delta_s\eta_s\right)u_s\,\mathrm d\mu_{\hat g^s}.
$$
Denote $v_s=u_s-1$. It is clear that
\begin{equation}\label{Eq: 001}
\begin{split}
&\left|\int_U\eta_sR(\hat g^s) u_s\,\mathrm d\mu_{\hat g^s}\right|\\
\leq &\left(\int_{\{s\leq r\leq 4s\}}|R(\hat g^s)|^{\frac{2n}{n+2}}\,\mathrm d\mu_{\hat g^s}\right)^{\frac{n+2}{2n}}\left(\int_{\{s\leq r\leq 4s\}}|v_s|^{\frac{2n}{n-2}}\,\mathrm d\mu_{\hat g^s}\right)^{\frac{n-2}{2n}}\\
&\qquad\qquad+\left|\int_{\{s\leq r\leq 4s\}}\eta_sR(\hat g^s)\,\mathrm d\mu_{\hat g^s}\right|
\end{split}
\end{equation}
and
\begin{equation*}
\begin{split}
&\left|\int_U\delta_s\eta_s u_s\,\mathrm d\mu_{\hat g_s}\right|\leq  \delta_s\mathcal H^n_{\hat g^s}(\{s\leq r\leq 4s\})\\
+\delta_s &[\mathcal H^n_{\hat g^s}(\{s\leq r\leq 4s\})]^{\frac{n+2}{2n}}\left(\int_{\{s\leq r\leq 4s\}}|v_s|^{\frac{2n}{n-2}}\,\mathrm d\mu_{\hat g^s}\right)^{\frac{n-2}{2n}}.
\end{split}
\end{equation*}
From the nonnegativity of $R(\hat g^s)$ outside $\{2s\leq r\leq 3s\}$, we see
\begin{equation}\label{Eq: int scalar 1}
\left|\int_{\{s\leq r\leq 4s\}}\eta_sR(\hat g^s)\,\mathrm d\mu_{\hat g^s} \right|\leq \max\left\{\left|\int_{\{2s\leq r\leq 3s\}}R(\hat g^s)\,\mathrm d\mu_{\hat g^s}\right|,\left|\int_{\{s\leq r\leq 4s\}}R(\hat g^s)\,\mathrm d\mu_{\hat g^s}\right|\right\}.
\end{equation}
Recall from \cite[formula (4.2)]{Bartnik86} that
\[
R(\hat g^s)=|\hat g^s|^{-\frac{1}{2}}\partial_i\left(|\hat g^s|^{\frac{1}{2}}\hat g^{s,ij}\left(\hat\Gamma^s_j-\frac{1}{2}\partial_j(\log|\hat g^s|)\right)\right)-\frac{1}{2}\hat g^{s,ij}\hat\Gamma^s_i\partial_j\left(\log|\hat g^s|\right)+\hat g^{s,ij}\hat g^{s,kl}\hat g^{s,pq}\hat\Gamma^s_{ikp}\hat\Gamma^s_{jql},
\]
where
$$
\hat\Gamma^s_{ijk}=\frac{1}{2}\left(\hat g^s_{jk,i}+\hat g^s_{ik,j}-\hat g^s_{ij,k}\right)\quad \text{and}\quad\hat \Gamma^s_{k}=\hat g^{s,ij}\hat\Gamma^s_{ijk}.
$$
It follows from the fact $|\partial \hat g^s|=O(r^{1-n})$ that the last two terms in the expression of $R(\hat g^s)$ are $O(r^{2-2n})$. Here and in the sequel the constants involved in $O(\cdot)$ are independent of $s$. Let $(\lambda,\mu)=(1,4)$ or $(2,3)$. Then we have
\[
\begin{split}
\int_{\{\lambda s\leq r\leq \mu s\}}R(\hat g^s)\,\mathrm d\mu_{\hat g^s}&=\int_{\{\lambda s\leq r\leq \mu s\}}\partial_i\left(|\hat g^s|^{\frac{1}{2}}\hat g^{s,ij}\left(\hat\Gamma^s_j-\frac{1}{2}\partial_j(\log|\hat g^s|)\right)\right)\,\mathrm dx+O(s^{2-n})\\
&=\int_{\{r=\mu s\}}|\hat g^s|^{\frac{1}{2}}\hat g^{s,ij}\left(\hat\Gamma^s_j-\frac{1}{2}\partial_j(\log|\hat g^s|)\right)\,\mathrm d\sigma^i\\
&\qquad-\int_{\{r=\lambda s\}}|\hat g^s|^{\frac{1}{2}}\hat g^{s,ij}\left(\hat\Gamma^s_j-\frac{1}{2}\partial_j(\log|\hat g^s|)\right)\,\mathrm d\sigma^i+O(s^{2-n}),
\end{split}
\]
where $\mathrm d\sigma^i$ is the normal area element with respect to the Euclidean metric. Recall that we have $|\hat g^s_{ij}-\delta_{ij}|=O(r^{2-n})$ and $|\partial\hat g^s|=O(r^{1-n})$. So it holds
$$
|\hat g^s|^{\frac{1}{2}}\hat g^{s,ij}\left(\hat\Gamma^s_j-\frac{1}{2}\partial_j(\log|\hat g^s|)\right)=\hat g^s_{ij,j}-\hat g^s_{jj,i}+O(r^{3-2n}).
$$
Then we know
\begin{equation}\label{Eq: int scalar 2}
\int_{\{\lambda s\leq r\leq \mu s\}}R(\hat g^s)\,\mathrm d\mu_{\hat g^s}=\int_{\{r=\mu s\}}(\hat g^s_{ij,j}-\hat g^s_{jj,i})\,\mathrm d\sigma^i-\int_{\{r=\lambda s\}}(\hat g^s_{ij,j}-\hat g^s_{jj,i})\,\mathrm d\sigma^i+O(s^{2-n}).
\end{equation}
Combining \eqref{Eq: int scalar 1} and \eqref{Eq: int scalar 2}, we conclude that the second term on the right hand side of \eqref{Eq: 001} is no greater than
$$
\left|\int_{\{r=4s\}}(\hat g^s_{ij,j}-\hat g^s_{jj,i})\,\mathrm d\sigma^i-\int_{\{r=s\}}(\hat g^s_{ij,j}-\hat g^s_{jj,i})\,\mathrm d\sigma^i\right|+Cs^{-1}
$$
or
$$
\left|\int_{\{r=3s\}}(\hat g^s_{ij,j}-\hat g^s_{jj,i})\,\mathrm d\sigma^i-\int_{\{r=2s\}}(\hat g^s_{ij,j}-\hat g^s_{jj,i})\,\mathrm d\sigma^i\right|+Cs^{-1}.
$$
Combined with \eqref{Eq: no extra mass} and \eqref{Eq: expression gs}, it follows
\begin{equation}\label{Eq: scalar estimate}
\left|\int_{\{s\leq r\leq 4s\}}\eta_sR(\hat g^s)\,\mathrm d\mu_{\hat g^s}\right|=o(1),\quad \text{as}\quad s\to 0.
\end{equation}
Recall from \eqref{Eq: integral v} that
$$
\left(\int_U|v_s|^{\frac{2n}{n-2}}\,\mathrm d\mu_{\hat g^s}\right)^{\frac{n-2}{2n}}\leq 2c_S^{-1}\left(\int_{U}|\eta_s R(\hat g^s)-\delta_s\eta_s|^{\frac{2n}{n+2}}\,\mathrm d\mu_{\hat g^s}\right)^{\frac{n+2}{2n}}.
$$
Combined with \eqref{Eq: integral scalar} and \eqref{Eq: delta_s}, it yields
\begin{equation}\label{Eq: vs}
\left(\int_U|v_s|^{\frac{2n}{n-2}}\,\mathrm d\mu_{\hat g^s}\right)^{\frac{n-2}{2n}}=o(1),\quad \text{as}\quad s\to 0,
\end{equation}
and so we have
$$
\left|\int_U\eta_sR(\hat g^s) u_s\,\mathrm d\mu_{\hat g^s}\right|+\left|\int_U\delta_s\eta_s u_s\,\mathrm d\mu_{\hat g_s}\right|=o(1),\quad \text{as}\quad s\to 0.
$$
By taking $s$ large enough, we can guarantee $|A_s|\leq \epsilon/2$. Fix such an $s$ below.

Take
$$
u_{s,\tau}=\frac{u_s+\tau}{1+\tau}
$$
with $\tau$ a positive constant to be determined later. We consider the conformal deformation
$
\bar g=(u_{s,\tau})^{\frac{4}{n-2}}\hat g^s.
$
A straightforward computation gives
\begin{equation*}
\begin{split}
R(\bar g)&=(u_{s,\tau})^{-\frac{n+2}{n-2}}\left(-\frac{4(n-1)}{n-2}\Delta_{\hat g^s}u_{s,\tau}+R(\hat g^s)u_{s,\tau}\right)\\
&=(u_{s,\tau})^{-\frac{n+2}{n-2}}(1+\tau)^{-1}\left(\left((1-\eta_s)R(\hat g^s)+\delta_s\eta_s\right)u_s+R(\hat g^s)\tau\right).
\end{split}
\end{equation*}
Note that $R(\hat g^s)$ takes possible negative values only in $\{2s\leq r\leq 3s\}$, where the term $\left((1-\eta_s)R(\hat g^s)+\delta_s\eta_s\right)u_s$ has a positive lower bound. So we can take $\tau$ small enough such that $R(\bar g)$ is nonnegative everywhere. Since the function $u_{s,\tau}$ is no less than $\tau(1+\tau)^{-1}$, the metric $\bar g$ is still complete. In the region $\{r\geq 3s\}$, the metric $\bar g$ can be expressed as
$$
\bar g_{ij}=\left(u_{s,\tau}\left(1+\frac{m}{2r^{n-2}}\right)\right)^{\frac{4}{n-2}}\delta_{ij}=\left(1+\frac{\bar m}{2r^{n-2}}\right)^{\frac{4}{n-2}}+\bar h_{ij},
$$
where
$
|\bar m-m|=2(1+\tau)^{-1}|A_s|\leq \epsilon.
$

Now let us estimate $\|\bar g-g\|_g$ in the distinguished end $\mathcal E$. From the expression \eqref{Eq: expression gs} we have $\|\hat g^s-g\|_g=O(s^{2-n})$. To obtain $\|\bar g^s-g\|_g\leq \epsilon$ for sufficiently large $s$, all we need to show is $\|v_s\|_{L^\infty(\mathcal E)}\to 0$ as $s\to+\infty$. From the equation of $u_s$ and the relation $v_s=u_s-1$, we see
$$
\Delta_{\hat g^s} v_s-\frac{n-2}{4(n-1)}\left(\eta_s R(\hat g^s)-\delta_s\eta_s\right)v_s=\frac{n-2}{4(n-1)}\left(\eta_s R(\hat g^s)-\delta_s\eta_s\right).
$$
Clearly we have $\|\eta_s R(\hat g^s)-\delta_s\eta_s\|_{L^\infty(\mathcal E)}=O(s^{2-n})$. It follows from \cite[Theorem 8.17]{GT2001} that $\|v_s\|_{L^\infty(\mathcal E)}\to 0$ as $s\to+\infty$.
This completes the proof.
\end{proof}

Based on above proposition we have the following
\begin{theorem}\label{Thm: relate to torus}
If Conjecture \ref{Conj: Geroch} holds, then $(M,g,\mathcal E)$ has nonnegative ADM mass once it has nonnegative scalar curvature.
\end{theorem}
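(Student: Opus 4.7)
The plan is to argue by contradiction via a Lohkamp-type compactification adapted to the presence of arbitrary ends. Assume $(M,g,\mathcal E)$ has nonnegative scalar curvature but ADM mass $m<0$. Fix $\epsilon=|m|/3$ and apply Proposition \ref{Prop: deformation} to produce, on the same underlying manifold $M$ (which still carries all of the original arbitrary ends), a new metric $\bar g$ which is asymptotically Schwarzschild in $\mathcal E$, has nonnegative scalar curvature, and has ADM mass $\bar m$ satisfying $|\bar m-m|\leq \epsilon$; in particular $\bar m<0$.

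Next I would Lohkamp-compactify the distinguished end. In $\mathcal E$ the metric $\bar g$ is of the form $U^{4/(n-2)}\delta_{ij}+\bar h_{ij}$ with $U=1+\bar m/(2r^{n-2})$ and $|\bar h|+r|\partial\bar h|+r^2|\partial\partial\bar h|=O(r^{1-n})$. Because $\bar m<0$, the conformal factor $U$ is strictly less than $1$ for large $r$. Choose radii $1\ll r_1<r_2$ and a smooth cutoff $\chi:\mathcal E\to[0,1]$ with $\chi\equiv 1$ on $\{r\leq r_1\}$ and $\chi\equiv 0$ on $\{r\geq r_2\}$, set $\hat U=\chi U+(1-\chi)$, and taper $\bar h$ to $0$ on the same annulus. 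A direct computation of the scalar curvature of the interpolated conformally flat metric $\tilde g=\hat U^{4/(n-2)}\delta_{ij}$ shows that the leading contribution from the cutoff has the favorable sign because $U-1<0$; upon choosing $r_1$ sufficiently large one obtains $R(\tilde g)\geq 0$ throughout and $R(\tilde g)>0$ on some open subset of the transition annulus. The resulting complete manifold $(M,\tilde g)$ carries all the original arbitrary ends, has nonnegative scalar curvature, and is exactly flat Euclidean on $\{r\geq r_2\}\subset\mathcal E$.

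Now I would cap the Euclidean end with a flat torus. Fix a flat torus $T^n_L$ of sufficiently large side length, remove a round Euclidean ball, and isometrically identify the resulting boundary sphere with a coordinate sphere $\{r=r_3\}\subset\mathcal E$ for some $r_3>r_2$, discarding the outer flat piece of $\mathcal E$ in the process. The outcome is a smooth complete Riemannian manifold $(M',\tilde g')$ with $R(\tilde g')\geq 0$, $R(\tilde g')>0$ somewhere, and with underlying topology of the form $T^n\sharp X$, where $X$ is obtained from $M$ by capping the distinguished end with a ball. Applying Conjecture \ref{Conj: Geroch} then forces $\tilde g'$ to be flat, contradicting $R(\tilde g')>0$ at some point. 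This contradiction shows $m\geq 0$.

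The main obstacle I expect is the Lohkamp interpolation itself, namely keeping $R(\tilde g)\geq 0$ across the cutoff annulus. This rests essentially on the strict negativity of $\bar m$: the scalar curvature of a conformally flat metric $v^{4/(n-2)}\delta$ is proportional to $-v^{-(n+2)/(n-2)}\Delta v$, so the sign of $R(\tilde g)$ is governed by that of $-\Delta\hat U=-\Delta\chi\cdot (U-1)-2\nabla\chi\cdot\nabla U$ (since $\Delta U=0$); with $U-1<0$ the first term has the desired sign on the region where $\chi$ is decreasing concavely, while the gradient cross term and the perturbation coming from $\bar h$ are absorbed by making $r_1$ (and $r_2-r_1$) large enough. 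A careful choice of $\chi$ and of the radii $r_1,r_2$ then yields the required nonnegativity together with strict positivity on an open set, which is the crucial input for the final contradiction.
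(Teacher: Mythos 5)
Your overall strategy matches the paper: assume $m<0$, apply Proposition \ref{Prop: deformation} to get an asymptotically Schwarzschild metric with $\bar m<0$, modify it to be exactly flat near infinity, and then compactify the distinguished end into a torus so that Conjecture \ref{Conj: Geroch} gives the contradiction. But the heart of the argument---the Lohkamp interpolation---is wrong in the form you have written it, and there is a sign error in the claimed favorable contribution.

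With $\hat U=\chi U+(1-\chi)=1+\chi(U-1)$ and $\Delta U=0$ one indeed gets $\Delta\hat U=(\Delta\chi)(U-1)+2\nabla\chi\cdot\nabla U$, and you want $\Delta\hat U\leq 0$. Take $\chi$ radial, decreasing from $1$ on $\{r\leq r_1\}$ to $0$ on $\{r\geq r_2\}$. Just past $r_1$, smoothness forces $\chi'\approx 0$ and $\chi''<0$, so $\Delta\chi=\chi''+(n-1)\chi'/r<0$; since $U-1<0$, the product $(\Delta\chi)(U-1)$ is \emph{positive}, i.e.\ it has exactly the wrong sign, and the cross term $2\nabla\chi\cdot\nabla U$ vanishes to higher order there because $|\nabla\chi|\to 0$. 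More structurally, writing the sign condition as $\chi''+(3-n)\chi'/r\geq 0$, i.e.\ $(r^{3-n}\chi')'\geq 0$, is impossible for any smooth radial cutoff with $\chi'=0$ at both ends and $\chi'<0$ in between, because $r^{3-n}\chi'$ would then have to be nondecreasing while starting and ending at $0$ and being negative in the middle. So no choice of $r_1$, $r_2$, or $\chi$ rescues the linear-in-$r$ interpolation; the failure is not a matter of making $r_1$ large.

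The paper avoids this entirely by acting in the \emph{range} of the conformal factor rather than in the radial variable. First, the metric produced in the proof of Proposition \ref{Prop: deformation} is not merely asymptotically Schwarzschild but \emph{exactly} conformally flat and scalar flat on $\{r\geq 3s\}$, so the conformal factor $u$ is genuinely harmonic there; you do not need to taper any error term $\bar h$ to zero (and indeed introducing that tapering only adds noise). Second, the paper composes $u$ with a concave nondecreasing $\zeta$ that equals the identity for $t\leq 1-3\epsilon/4$ and a constant $1-\epsilon/2$ for $t\geq 1-\epsilon/4$; then $\Delta(\zeta\circ u)=\zeta''|\nabla u|^2+\zeta'\Delta u=\zeta''|\nabla u|^2\leq 0$ with strict inequality on an open set, with no competing term of indeterminate sign. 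This is exactly the smoothed version of $\min(u,c)$ for a constant $c<1$, which is superharmonic as a minimum of harmonic functions; your $\hat U=\chi U+(1-\chi)$ is not of this form. To repair your argument you should (i) use the exact conformal flatness near infinity (so there is no $\bar h$ to taper) and (ii) replace the radial interpolation by the concave reparametrization $\zeta\circ u$, or equivalently a mollified $\min(u,c)$. The subsequent torus compactification step is fine, whether you glue in a flat torus minus a ball along a coordinate sphere or, as the paper does, pass to the quotient of a large cube; both land in $T^n\sharp X$.
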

\begin{proof}
Let us follow Lohkamp's argument from \cite{Lohkamp99}. Suppose that $(M,g,\mathcal E)$ has negative ADM mass $m$. Set $\epsilon=-m/2$. From Proposition \ref{Prop: deformation} above we can construct a new complete metric $\bar g$ on $M$ such that $(M,\bar g,\mathcal E)$ is an asymptotically Schwarzschild manifold with arbitrary ends, which has nonnegative scalar curvature and ADM mass $\bar m$ no greater than $m/2$. Moreover, the metric $\bar g$ can be expressed as
$
\bar g_{ij}=u^{\frac{4}{n-2}}\delta_{ij}
$
around the infinity, which is also scalar flat. In particular, the function $u$ is a harmonic function around infinity with respect to the Euclidean metric. Notice that $u$ has the expansion
$$
u=1+\frac{\bar m}{2r^{n-2}}+O(r^{1-n}),\quad \bar m<0,
$$
so we can take $s_1$ large enough such that $u<1$ on $\{r=s_1\}$. Denote
$$
\epsilon=1-\sup_{r(x)=s_1}u(x).
$$
It is clear that $u>1-\epsilon/4$ in $\{r\geq s_2\}$ for sufficiently large $s_2>s_1$. Take a cutoff function $\zeta:[0,+\infty)\to [0,1-\epsilon/2]$ such that $\zeta(t)=t$ when $t\leq 1-3\epsilon/4$ and $\zeta(t)=1-\epsilon/2$ when $t\geq 1-\epsilon/4$. Moreover, we can also require $\zeta'\geq 0$ and $\zeta''\leq 0$ in $[0,+\infty)$ as well as $\zeta''<0$ in $(1-3\epsilon/4,1-\epsilon/4)$. Let
\begin{equation*}
\begin{split}
v=\left\{
\begin{array}{cc}
\zeta\circ u,&r\geq s_1;\\
u& r\leq s_1.
\end{array}\right.
\end{split}
\end{equation*}
Clearly $v$ is a smooth function defined on entire $M$ since $v$ equals to $u$ around $\{r=s_1\}$. A direct computation shows
$$
\Delta v=\zeta''|\nabla u|^2+\zeta'\Delta u\leq 0\quad \text{in}\quad \{r\geq s_1\}
$$
and further $\Delta v<0$ at some point in $\{s_1<r<s_2\}$, where $\Delta$ and $\nabla$ are Laplace and gradient operators with respect to the Euclidean metric. Define
$$
\tilde g=\left(\frac{v}{u}\right)^{\frac{4}{n-2}}\bar g.
$$
It is easy to verify that $\tilde g$ is a complete metric on $M$ with nonnegative scalar curvature (positive somewhere), which is exactly the Euclidean metric around the infinity of $\mathcal E$.

Take a large Euclidean cube $\mathcal C$ and we denote $M_{out}=\mathcal E-\Phi^{-1}(\mathcal C)$ and $M_{in}=M-M_{out}$, where $\Phi:\mathcal E\to \mathbb R^n-\bar B_1$ is the diffeomorphism from Definition \ref{Def 1}.
After identifying the opposite faces of cube $\mathcal C$, the quotient space $M_{in}/\sim$ is a smooth manifold in the form of $T^n\sharp X^n$ for some $n$-manifold $X$.  The metric $\tilde g$ induces a smooth complete metric on $M_{in}/\sim$ with nonnegative scalar curvature (positive somewhere). This leads to a contradiction to our assumption.
\end{proof}

Now, we are ready to prove the main theorem.

\begin{proof}[{P\bf{roof for Theorem \ref{Thm: main 1}}}]
Since Conjecture \ref{Conj: Geroch} was verified by Chodosh and Li \cite{CL2020} up to dimension $7$, we conclude that $(M,g,\mathcal E)$ has nonnegative ADM mass when the dimension $n$ is no greater than $7$.

So we just need to focus on the rigidity part in the following. First we show that the scalar curvature of $g$ has to vanish everywhere if the ADM mass of $(M,g,\mathcal E)$ is zero. Otherwise, we can take a neighborhood $U$ of $\mathcal E$ such that $R(g)$ is positive at some point $p$ in $U$. Take a nonnegative cutoff function $\eta:M\to [0,1]$ with compact support in $U$ such that $\eta\equiv 1$ around point $p$. It follows from Proposition \ref{Prop: the conformal factor} that there is a positive function $u$ solving
$$
\Delta_g u-\frac{n-2}{4(n-1)}\eta R(g)u=0,
$$
which has the expansion $u=1+Ar^{2-n}+O(r^{1-n})$ with
$$
A=-\frac{1}{4(n-1)|\mathbb S^{n-1}|}\int_U\eta R(g)u\,\mathrm d\mu_g<0.
$$
Define
$$
\bar g=\left(\frac{u+1}{2}\right)^{\frac{4}{n-2}}g.
$$
We emphasize that the choice of the conformal factor $(u+1)/2$ is to guarantee the completeness of metric $\bar g$ (if the conformal factor tends to zero at infinity of those bad ends, then metric $\bar g$ may be incomplete). It is not difficult to verify that $(M,\bar g,\mathcal E)$ is an asymptotically flat manifold with arbitrary ends, which has nonnegative scalar curvature but negative ADM mass. This is impossible from our previous discussion.

Next we prove the Ricci flatness of the metric $g$. Otherwise we take a neighborhood $U$ of $\mathcal E$ such that $\Ric(g)$ does not vanish at some point $p$ in $U$. Take a nonnegative cutoff function $\eta:M\to [0,1]$ with compact support $S$ in $U$ such that $\eta\equiv 1$ around point $p$. After applying a variation argument to the first Neumann eigenvalue of conformal Laplace operator (see \cite[proof of Lemma 3.3]{Kazdan82}), we can pick up a positive contant $\epsilon$ small enough such that the metric $\bar g=g-\epsilon\eta\Ric(g)$ satisfies
\begin{equation}\label{Eq: 002}
\int_{S}|\nabla_{\bar g}\zeta|^2+\frac{n-2}{4(n-1)}R(\bar g)\zeta^2\,\mathrm d\mu_{\bar g}>c\int_{S}\zeta^2\,\mathrm d\mu_{\bar g},\quad\forall\,\zeta\neq 0\in C^\infty(S),
\end{equation}
for some positive constant $c$. Denote $c_S$ to be the Sobolev constant of $U$ with respect to the metric $\bar g$. Since $\bar g$ is almost equal to $g$, we can also require
\begin{equation*}
\left(\int_{U}|(R(\bar g))_-|^{\frac{n}{2}}\,\mathrm d\mu_{\bar g}\right)^{\frac{2}{n}}\leq \frac{c_S}{4}
\end{equation*}
by further decreasing the value of $\epsilon$. Let $\tilde\eta$ be another nonnegative cutoff function with compact support $\tilde S$ in $U$, which has a positive lower bound in the support of $\eta$. For $\delta$ small enough, we can construct a smooth positive function $u$ from Proposition \ref{Prop: the conformal factor} solving
$$
\Delta_{\bar g}u-\frac{n-2}{4(n-1)}(R(\bar g)-\delta\tilde\eta)u=0.
$$
The function $u$ has the expansion $u=1+Ar^{2-n}+O(r^{1-n})$ and we expect $A$ to be negative when $\delta$ is sufficiently small. From integration by parts we have
\begin{equation*}
\begin{split}
(n-2)|\mathbb S^{n-1}|A&=\int_{\partial U}u\frac{\partial u}{\partial \vec n}\,\mathrm d\sigma_{\bar g}-\int_{U}|\nabla_{\bar g}u|^2+\frac{n-2}{4(n-1)}\left(R(\bar g)-\delta\tilde \eta\right)u^2\,\mathrm d\mu_{\bar g}\\
&\leq -\int_{U}|\nabla_{\bar g}u|^2+\frac{n-2}{4(n-1)}R(\bar g)u^2\,\mathrm d\mu_{\bar g}+\delta\int_{\tilde S}u^2\,\mathrm d\mu_{\bar g}.
\end{split}
\end{equation*}
From \eqref{Eq: 002} and the fact $R(\bar g)\equiv 0$ outside $\tilde S$, it follows
$$
\int_{U}|\nabla_{\bar g}\zeta|^2+\frac{n-2}{4(n-1)}R(\bar g)\zeta^2\,\mathrm d\mu_{\bar g}>\tilde c\int_{\tilde S}\zeta^2\,\mathrm d\mu_{\bar g},\quad\forall\,\zeta\neq 0\in C^\infty(U),
$$
for some positive constant $\tilde c$ independent of $\delta$. So we can guarantee $A<0$ by taking $\delta$ small enough. As before, we consider the conformal metric
$$
\tilde g=\left(\frac{u+\tau}{1+\tau}\right)^{\frac{4}{n-2}}\bar g.
$$
A straightforward computation shows
$$
R(\tilde g)=(1+\tau)^{\frac{4}{n-2}}(u+\tau)^{-\frac{n+2}{n-2}}\left(\delta\tilde\eta u+\tau R(\bar g)\right).
$$
In particular, we have $R(\tilde g)\geq 0$ if $\tau$ is chosen to be small enough. However, it is easy to verify that $(M,\tilde g,\mathcal E)$ is an asymptotically flat manifold with arbitrary ends whose ADM mass is negative. Again this contradicts to our previous discussion.

Now we can deduce that $M$ has only one end $\mathcal E$. Otherwise, the Cheeger-Gromoll splitting theorem (see \cite{CG1971}) yields that $(M,g)$ must be isometric to a Riemannian product manifold $N\times \mathbb R$ for a closed manifold $N$. In particular, each $N$-slice is totally geodesic. However, this is impossible since such slice cannot appear at the infinity of the asymptotically flat end $\mathcal E$. Finally, the Bishop-Gromov volume comparison theorem implies that $(M,g)$ is isometric to the Euclidean space.
\end{proof}

Now we can derive Corollary \ref{Cor: ALE} from Theorem \ref{Thm: main 1} directly by a simple lifting argument.
\begin{proof}[P\bf{roof of Corollary \ref{Cor: ALE}}]
Let us consider the universal covering
$
p:(M,g)\to (M_\Gamma,g_{\Gamma}).
$
Clearly $(M,g)$ has nonnegative scalar curvature as well. Let $\mathcal E$ be one of the connected components of $p^{-1}(\mathcal E_\Gamma)$ and it is easy to verify that $\mathcal E$ is still a covering of $\mathcal E_\Gamma$.
Since the inclusion map $i_*:\pi_1(\mathcal E_\Gamma)\to \pi_1(M_\Gamma)$ is injective, we claim that $\mathcal E$ has to be simply connected and so it is diffeomorphic to $\mathbb R^n-\bar B_1$. Otherwise, there is a homotopically non-trivial loop $\gamma$ in $\mathcal E$ that shrinks to a point in $M$. Correspondingly, the image $p(\gamma)$ is a loop in $\mathcal E_\Gamma$ that is homotopic to a point in $M_\Gamma$. Now the injectivity of $i_*$ implies that $p(\gamma)$ shrinks to a point in $\mathcal E_\Gamma$. We obtain a contradiction by simply lifting this homotopy to $\mathcal E$. From definition we see that $(M,g,\mathcal E)$ is an asymptotically flat manifold with arbitrary ends whose ADM mass equals to $|\Gamma|\cdot m$, where $m$ is the ADM mass of $(M_\Gamma,g_{\Gamma},\mathcal E_\Gamma)$. It follows from Theorem \ref{Thm: main 1} that $m\geq 0$. It rests to rule out the possibility that $m=0$. In this case, $(M,g)$ is isometric to the Euclidean $n$-space. From the theory of covering spaces the Deck transformation group of $\mathbb R^n$ here is isometric to the fundamental group $\pi_1(M_\Gamma)$, which contains a non-trivial finite group $G=i_*(\pi_1(\mathcal E_\Gamma))$ due to our assumption. Since each element $g$ in $G$ has the form
$$
g:\mathbb R^n\to \mathbb R^n,\quad x\mapsto T_g\cdot x+v_g,\quad T_g\in O(n),\quad v_g\in\mathbb R^n,
$$
the group $G$ has a fixed point
$$
\frac{1}{|G|}\sum_{g\in G}g(0).
$$
However, only identity can have fixed points among all Deck transformations and this leads to a contradiction.
\end{proof}

\bibliography{bib}
\bibliographystyle{amsplain}
\end{document}